\def\rotateclockwise#1{
	% Rotate input point by 90 degrees clockwise.
	\newdimen\xrw
	\pgfextractx{\xrw}{#1}
	\newdimen\yrw
	\pgfextracty{\yrw}{#1}
	% \pgfpoint{-\y}{\x}
	\pgfpoint{\yrw}{-\xrw}
}
\def\rotatecounterclockwise#1{
	% Rotate input point by 90 degrees clockwise.
	\newdimen\xrcw
	\pgfextractx{\xrcw}{#1}
	\newdimen\yrcw
	\pgfextracty{\yrcw}{#1}
	% \pgfpoint{-\y}{\x}
	\pgfpoint{-\yrcw}{\xrcw}
}
\def\outsidespacerpgfclockwise#1#2#3{
	% #1 start point
	% #2 end point
	% #3 radius
	% Compute a length-radius vector perpendicular (clockwise)
	% to the vector from start point to end point.
	\pgfpointscale{#3}{
		\rotateclockwise{
			\pgfpointnormalised{
				\pgfpointdiff{#1}{#2}}}}
}
\def\outsidespacerpgfcounterclockwise#1#2#3{
	% #1 start point
	% #2 end point
	% #3 radius
	% Compute a length-radius vector perpendicular (counterclockwise)
	% to the line from start point to end point.
	\pgfpointscale{#3}{
		\rotatecounterclockwise{
			\pgfpointnormalised{
				\pgfpointdiff{#1}{#2}}}}
}
\def\outsidepgfclockwise#1#2#3{
	% #1 start point
	% #2 end point
	% #3 radius
	% Add to end point a length-radius vector perpendicular
	% (counter-clockwise) to the line from start point to end point.
	\pgfpointadd{#2}{\outsidespacerpgfclockwise{#1}{#2}{#3}}
}
\def\outsidepgfcounterclockwise#1#2#3{
	% #1 start point
	% #2 end point
	% #3 radius
	% Add to end point a length-radius vector perpendicular
	% (counter-clockwise) to the line from start point to end point.
	\pgfpointadd{#2}{\outsidespacerpgfcounterclockwise{#1}{#2}{#3}}
}
\def\outside#1#2#3{
	($ (#2) ! #3 ! -90 : (#1) $)
}
\def\cornerpgf#1#2#3#4{
	% #1 = previous pgf point 
	% #2 = current pgf point
	% #3 = next pgf point
	% #4 = radius
	% Computes a path comprising a rounded corner on the outside of the angle #1#2#3.
	\pgfextra{
		\pgfmathanglebetweenpoints{#2}{\outsidepgfcounterclockwise{#1}{#2}{#4}}
		\let\anglea\pgfmathresult
		\let\startangle\pgfmathresult
		
		\pgfmathanglebetweenpoints{#2}{\outsidepgfclockwise{#3}{#2}{#4}}
		\pgfmathparse{\pgfmathresult - \anglea}
		\pgfmathroundto{\pgfmathresult}
		\let\arcangle\pgfmathresult
		\ifthenelse{180=\arcangle \or 180<\arcangle}{
			\pgfmathparse{-360 + \arcangle}}{
			\pgfmathparse{\arcangle}}
		\let\deltaangle\pgfmathresult
		
		\newdimen\x
		\pgfextractx{\x}{\outsidepgfcounterclockwise{#1}{#2}{#4}}
		\newdimen\y
		\pgfextracty{\y}{\outsidepgfcounterclockwise{#1}{#2}{#4}}
	}
	-- (\x,\y) arc [start angle=\startangle, delta angle=\deltaangle, radius=#4]
}
\def\corner#1#2#3#4{
	\cornerpgf{\pgfpointanchor{#1}{center}}{\pgfpointanchor{#2}{center}}{\pgfpointanchor{#3}{center}}{#4}
}
\def\hedgeiii#1#2#3#4{
	% #1#2#3 = tikz points
	% #4 = radius
	% Computes a path comprising the line of the points outside of the
	% convex hull H of the points #1#2#3 that have distance #4 to H.
	% Points #1#2#3 need to be in clockwise order.
	\outside{#1}{#2}{#4} \corner{#1}{#2}{#3}{#4} \corner{#2}{#3}{#1}{#4} \corner{#3}{#1}{#2}{#4} -- cycle
}
\def\hedgem#1#2#3#4{
	% #1#2 = tikz points
	% #3 = list of tikz points
	% #4 = radius
	% Computes a path comprising the line of the points outside of the convex hull H of the points #1#2[#3] that have distance #4 to H.
	% Points #1#2[#3] need to be vertices of a convex polygon and in clockwise order.
	
	\outside{#1}{#2}{#4}
	\pgfextra{
		\def\hgnodea{#1}
		\def\hgnodeb{#2}
	}
	foreach \c in {#3} {
		\corner{\hgnodea}{\hgnodeb}{\c}{#4}
		\pgfextra{
			\global\let\hgnodea\hgnodeb
			\global\let\hgnodeb\c
		}
	}
	\corner{\hgnodea}{\hgnodeb}{#1}{#4}
	\corner{\hgnodeb}{#1}{#2}{#4}
	-- cycle
}
\def\hgrotate#1{
	% Rotate input point by 90 degrees counter-clockwise.
	\newdimen\x
	\pgfextractx{\x}{#1}
	\newdimen\y
	\pgfextracty{\y}{#1}
	\pgfpoint{-\y}{\x}
}
\def\hgperpr#1#2#3{
	% #1 start point
	% #2 end point
	% #3 radius
	% Compute a length-radius vector perpendicular (counter-clockwise)
	% to the line from start point to end point.
	\pgfpointscale{#3}{
		\hgrotate{
			\pgfpointnormalised{
				\pgfpointdiff{#1}{#2}}}}
}
\def\hgaddeperpr#1#2#3{
	% #1 start point
	% #2 end point
	% #3 radius
	% Add to end point a length-radius vector perpendicular
	% (counter-clockwise) to the line from start point to end point.
	\pgfpointadd{#2}{\hgperpr{#1}{#2}{#3}}
}
\def\hgaddsperpr#1#2#3{
	% #1 start point
	% #2 end point
	% #3 radius
	% Add to start point a length-radius vector perpendicular
	% (counter-clockwise) to the line from start point to end point.
	\pgfpointadd{\hgperpr{#1}{#2}{#3}}{#1}
}
\def\hgcorner#1#2#3#4{
	% #1 = previous point
	% #2 = current point
	% #3 = next point
	% #4 = radius
	% Draw a line to and a round corner on the outside of the angle #1#2#3.
	\pgflineto{\hgaddeperpr{#1}{#2}{#4}}
	\pgfmathanglebetweenpoints{#1}{#2}\let\anga\pgfmathresult
	% \pgfmathanglebetweenpoints{#2}{#3}\let\angb\pgfmathresult
	% \pgfpatharc{90 + \anga}{90 + \angb}{#4}
	\pgfpatharcto{#4}{#4}{90 + \anga}{0}{0}{\hgaddsperpr{#2}{#3}{#4}}
}
\newtheorem{theorem}{Theorem}[section]
\newtheorem{lemma}[theorem]{Lemma}
\newtheorem{corollary}[theorem]{Corollary}
\newtheorem{claim}[theorem]{Claim}
\newtheorem{conjecture}[theorem]{Conjecture}
\theoremstyle{remark}
\def\C{\mathcal{C}}
\def\F{\mathcal{F}}
\def\P{\mathcal{P}}
\def\S{\mathcal{S}}
\def\se{\subseteq}
\def\ex{\textup{ex}}
\newcommand{\eps}{\varepsilon}
\definecolor{lblue}{rgb}{0.5,0.5,1}
\newcommand{\eq}[1]{\begin{equation}\label{eq:#1}}
	\newcommand{\eqe}{\end{equation}}
\newcommand\restr[2]{{
		\left.\kern-\nulldelimiterspace % automatically resize the bar with \right
		#1 % the function
		\vphantom{\big|} % pretend it's a little taller at normal size
		\right|_{#2} % this is the delimiter
}}
\author{%
	J\'ozsef Balogh \footnote{Department of Mathematics, University of Illinois at Urbana-Champaign, Urbana, Illinois 61801, USA, and Moscow Institute of Physics and Technology, Russian Federation. E-mail: \texttt{jobal@illinois.edu}. Research is partially supported by NSF grants DMS-1764123 and RTG DMS-1937241, the Arnold O. Beckman Research
		Award (UIUC Campus Research Board RB 18132), the Langan Scholar Fund (UIUC), and the Simons Fellowship.}
	\and Felix Christian Clemen \footnote {Department of Mathematics, University of Illinois at Urbana-Champaign, Urbana, Illinois 61801, USA, E-mail: \texttt{fclemen2@illinois.edu}. Research is partially supported by the Arnold O. Beckman Research
		Award (UIUC Campus Research Board RB 18132).}
	\and  Let\'icia Mattos \footnote {Freie Universität Berlin and Berlin Mathematical School (BMS/MATH+), E-mail: \texttt{leticiadmat@gmail.com}.
		Research is funded by the Deutsche Forschungsgemeinschaft (DFG, German Research Foundation) under Germany's Excellence Strategy -- The Berlin Mathematics Research Center MATH+ (EXC-2046/1, project ID: 390685689).}
}
\title{Counting $r$-graphs without forbidden configurations}
\date{}
\begin{document}
	\maketitle
	\begin{abstract}
		One of the major problems in combinatorics is to determine the number of $r$-uniform hypergraphs ($r$-graphs) on $n$ vertices which are free of certain forbidden structures.
		This problem dates back to the work of Erd\H{o}s, Kleitman and Rothschild, who showed that the number of $K_r$-free graphs on $n$ vertices is $2^{\ex(n,K_r)+o(n^2)}$.
		Their work was later extended to forbidding graphs as induced subgraphs by Prömel and Steger.
%		The results were stated in terms of a different notion of extremal number, which is not very well-understood even for simple families of $3$-graphs.
		
		Here, we consider one of the most basic counting problems for $3$-graphs. Let $E_1$ be the $3$-graph with $4$ vertices and $1$ edge. What is the number of induced $\{K_4^3,E_1\}$-free $3$-graphs on $n$ vertices? We show that the number of such $3$-graphs is of order $n^{\Theta(n^2)}$. More generally, we determine asymptotically the number of induced $\F$-free $3$-graphs on $n$ vertices for all families $\F$ of $3$-graphs on $4$ vertices. 
		We also provide upper bounds on the number of $r$-graphs on $n$ vertices which do not induce $i \in L$ edges on any set of $k$ vertices, where $L \se \big \{0,1,\ldots,\binom{k}{r} \big\}$ is a list which does not contain $3$ consecutive integers in its complement.
		Our bounds are best possible up to a constant multiplicative factor in the exponent when $k = r+1$.
		The main tool behind our proof is counting the solutions of a constraint satisfaction problem.
	\end{abstract}
	\section{Introduction}
	\subsection{History}
	For an $r$-uniform hypergraph ($r$-graph) $F$, let $\textup{ex}(n,F)$ denote the maximum number of edges in an $F$-free $r$-graph on $n$ vertices.
	One of the central questions in extremal combinatorics is to determine the extremal number $\textup{ex}(n,F)$.
	For $r = 2$, the extremal number is well-understood for all non-bipartite graphs, see~\cite{ErdosStone} and~\cite{Turan}. However, 
	determining the extremal number for general $r$-graphs is a well-known and hard problem. 
	The simplest and still not answered question posed by Tur\'an asks to determine the extremal number of $K_4^3$, the complete $3$-graph on $4$ vertices. 
	It is widely believed that 
	\begin{align*}
		\textup{ex}(n,K_4^3)=\left(\dfrac{5}{9}+o(1)\right)\binom{n}{3}.
	\end{align*}
	In a series of papers, different $K_4^3$-free $3$-graphs on $n$ vertices and $\frac{5}{9}\binom{n}{3}+o(n^3)$ edges were constructed by Brown~\cite{K43brown}, Kostochka~\cite{K43Kostochka} and Fon-der-Flaass~\cite{K43Fonderflaass} and Razborov~\cite{RazbarovK43}.
	In 2008, Frohmader~\cite{MR2465761} showed that there are $\Omega(6^{n/3})$
	non-isomorphic $r$-graphs which are conjectured to be extremal.
	This is believed to be one of the reasons of the difficulty of this problem. 
	For other related papers, see~\cite{BalCleLidmain,PikhurkoK43,RazbarovK43}. 
	
	The problem of determining the extremal number can also be extended to families of induced $r$-graphs.
	For a family of $r$-graphs $\F$, let $\ex_I(n,\F)$ denote the maximum number of edges in an induced $\F$-free $r$-graph on $n$ vertices.
	In 2010, Razborov~\cite{RazbarovK43} used the method of flag algebras to determine $\ex_I(n,\{K_4^3,E_1\})$, where $E_1$ denotes the $3$-graph with $4$ vertices and $1$ edge. 
	In his paper, he showed that
	\begin{align*}
		\ex_I(n,\{K_4^3,E_1\}) = \left(\dfrac{5}{9}+o(1)\right)\binom{n}{3}.
	\end{align*}
	Later, this result was extended by Pikhurko~\cite{PikhurkoK43}, who obtained the corresponding stability result and proved that there is only 
	one extremal induced $\{K_4^3,E_1\}$-free $3$-graph on $n$ vertices, up to isomorphism.
	Sometimes referred to as Tur\'an's construction and here denoted by $C_n$, the extremal induced $\{K_4^3,E_1\}$-free $3$-graph on $[n]$ is obtained as follows.
	Let $V_1\cup V_2\cup V_3$ be a partition of $[n]$ with $\big | |V_i|-|V_j| \big |\le 1$ for all $i,j \in [3]$. 
	An edge is placed in $C_n$ if it intersects each of the classes $V_1$, $V_2$ and $V_3$, or if for some $i \in [3]$ it contains two elements of $V_i$ and one of $V_{i+1}$, where the indices are understood modulo $3$. See Figure~\ref{fig:Cn} for an illustration of $C_n$. 		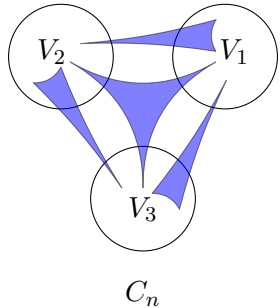
\begin{figure}[ht]
		\begin{center}	
\tikzset{
vtx/.style={inner sep=1.1pt, outer sep=0pt, circle, fill,draw}, 
hyperedge/.style={fill=blue,opacity=0.5,draw=black}, 
vtxBig/.style={inner sep=14pt, outer sep=0pt, circle, fill=white,draw}, 
hyperedge/.style={fill=blue,opacity=0.5,draw=black}, 
}
\begin{tikzpicture}[scale=1.4]
\draw (30:0.9) coordinate(x1) node[vtxBig]{};
\draw (150:0.9) coordinate(x2) node[vtxBig]{};
\draw (270:0.9) coordinate(x3) node[vtxBig]{};
\draw
(30:0.8) coordinate(1) %node[vtx](a){}
(150:0.8) coordinate(2) %node[vtx](b){}
(270:0.8) coordinate(3) %node[vtx](c){}
;
\draw[hyperedge] (1) to[out=210,in=330] (2) to[out=330,in=90] (3) to[out=90,in=210] (1);
\foreach \ashift in {30,150,270}{
\draw
(\ashift:0.8)++(\ashift+60:0.1) coordinate(1) %node[vtx](a){}
++(\ashift+60:0.3) coordinate(2) %node[vtx](a){}
(\ashift+120:0.8)++(\ashift+120+90:-0.2) coordinate(3)% node[vtx](a){}
;
\draw[hyperedge] (1) to[bend left] (2) to[bend left=5] (3) to[bend left=5] (1);
}
\draw (30:1)  node{$V_1$};
\draw (150:1) node{$V_2$};
\draw (270:1) node{$V_3$};
\draw (0,-1.8) node{$C_n$};
\end{tikzpicture}
\caption{Illustration of $C_n$.}\label{fig:Cn}
\end{center}
\end{figure}
	
	In this paper, 
	we first consider the problem of counting $\{K_4^3,E_1\}$-free $3$-graphs on $n$ vertices, which is the counting problem related to the results of Razborov~\cite{RazbarovK43} and Pikhurko~\cite{PikhurkoK43}.
	Recently, Balogh and Mubayi~\cite{personalcomBM} observed that a standard application of the hypergraph container method~\cite{MR3327533, MR3385638} shows that the number of induced $\{K_4^{3}, E_1\}$-free graphs on $n$ vertices is $2^{O(n^{8/3})}$.
	From the other side, we can construct a family $Q(n)$ with $2^{\Omega(n^2\log n)}$
	subgraphs of $C_n$ which are $\{K_4^3,E_1\}$-free. 
	A $3$-graph is in $Q(n)$ if it is obtained from $C_n$ by removing a linear\footnote{A $3$-graph $H$ is linear if every pair of distinct edges $e_1, e_2 \in E(H)$ satisfies $|e_1 \cap e_2| \le 1$.} $3$-graph with the additional property that every edge contains one element from each of the classes $V_1$, $V_2$ and $V_3$.
	It is not hard to show that every $3$-graph in $Q(n)$ is in fact $\{K_4^3,E_1\}$-free and that $|Q(n)| = 2^{\Omega(n^2\log n)}$.
	Balogh and Mubayi~\cite{personalcomBM} conjectured that almost all $\{K_4^3,E_1\}$-free 3-graphs are in this family, up to isomorphism. 
	\begin{conjecture}[Balogh and Mubayi~\cite{personalcomBM}]
		\label{conj:14-freecounting}
		Almost all $\{K_4^3,E_1\}$-free $3$-graphs on $[n]$ are in $Q(n)$, up to isomorphism. 
	\end{conjecture}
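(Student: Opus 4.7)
The plan is to combine the (induced) hypergraph container method of Saxton--Thomason with Pikhurko's stability theorem, and then to carry out a delicate local analysis of the induced $\{K_4^3,E_1\}$-free $3$-graphs that lie close to the extremal $3$-graph $C_n$.

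First, I would apply the induced-container theorem to the family $\{K_4^3,E_1\}$. This yields at most $2^{o(n^2\log n)}$ container pairs $(A,B)$ with $A\se B\se\binom{[n]}{3}$, $|A|\le\textup{ex}_I(n,\{K_4^3,E_1\})+o(n^3)$ and $|B\setminus A|=o(n^3)$, such that $A\se E(H)\se B$ for every induced $\{K_4^3,E_1\}$-free $3$-graph $H$ in some container. Pikhurko's stability theorem then guarantees that, up to the $o(n^3)$ slack, each $A$ agrees with $C_n(\P)$ for some near-balanced tripartition $\P=V_1\cup V_2\cup V_3$ of $[n]$. After grouping containers by $\P$, the problem reduces to bounding, for each balanced $\P$, the number of $\{K_4^3,E_1\}$-free $H$ whose symmetric difference with $C_n(\P)$ is contained in a prescribed set of $o(n^3)$ triples.

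Second, I would establish the key structural claim: up to $o(n^2)$ edge modifications, every such $H$ is obtained from $C_n(\P)$ by removing a linear sub-$3$-graph supported on the rainbow triples, placing $H$ in a small neighbourhood of $Q(n)$. The local engine is the following observation. For $u_1,u_2\in V_i$, $v\in V_{i+1}$, $w\in V_{i+2}$, the $4$-set $\{u_1,u_2,v,w\}$ contains exactly three triples of $C_n(\P)$---the rainbow triples $\{u_1,v,w\}$, $\{u_2,v,w\}$ and the $2{+}1$ triple $\{u_1,u_2,v\}$---so deleting both rainbow triples produces an induced $E_1$. Hence the removed rainbow triples must pairwise differ in at least two coordinates, i.e.\ form a linear hypergraph, which is exactly the defining property of $Q(n)$. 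A symmetric analysis on $4$-sets of types $\{u_1,u_2,v_1,v_2\}$ and $\{u_1,u_2,u_3,v\}$ shows that any $2{+}1$ deletion must occur in a ``clustered'' pattern along pair-equivalence classes on $V_i$ (contributing only a $2^{O(n\log n)}$ overhead) and must be consistent with the rainbow deletions; a further $4$-set check forbids the insertion of any non-edge of $C_n(\P)$.

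Third, the counting step finishes the argument. The number of near-balanced tripartitions is at most $3^n$, and for each $\P$ the number of linear tripartite $3$-graphs on parts of size $n/3$ is $2^{(1+o(1))(n^2/9)\log_2(n/3)}$, which matches the lower bound for $|Q(n)|$. The $2^{o(n^2\log n)}$ container overhead, the $2^{O(n\log n)}$ clustered-deletion overhead, and the $o(n^2)$ local corrections are all subleading, yielding $(1+o(1))|Q(n)|$ isomorphism classes of induced $\{K_4^3,E_1\}$-free $3$-graphs on $[n]$. The main obstacle is the structural step: Pikhurko's stability provides only an $o(n^3)$ error, whereas the counting argument demands an $o(n^2)$ error after exponentiation. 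Closing this gap requires a bootstrapping cleaning argument, in the spirit of Balogh--Bollob\'as--Simonovits, that iteratively relocates ``wrongly placed'' vertices by comparing their links to the expected $C_n$-link and using the absence of $K_4^3$ and $E_1$ to detect and repair defects. Making this cleaning strong enough to essentially forbid all $2{+}1$ deletions and all non-$C_n$ insertions, while at the same time canonically recovering $\P$ from $H$ itself so that different $H$'s are not overcounted across partitions, is the delicate point---and is presumably why Balogh and Mubayi left the statement as a conjecture.
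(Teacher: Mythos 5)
This statement is an open conjecture (attributed to Balogh and Mubayi), not a theorem of the paper: the authors only prove the much weaker Theorem~\ref{maincountingcorl}, namely that the count is $2^{\Theta(n^2\log n)}$, and explicitly leave Conjecture~\ref{conj:14-freecounting} unresolved. Your proposal is therefore not being measured against a proof in the paper, and on its own terms it is a strategy sketch rather than a proof. Two of its steps contain genuine, unfilled gaps. First, the container overhead: you assert that the induced-container theorem yields at most $2^{o(n^2\log n)}$ container pairs, but the paper records that the standard application of the hypergraph container method to $\{K_4^3,E_1\}$ gives only a bound of $2^{O(n^{8/3})}$, and $n^{8/3}\gg n^2\log n$. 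So the fingerprint/container count alone already swamps $|Q(n)|=2^{\Theta(n^2\log n)}$ by a super-exponential factor, and nothing in your outline explains how to reduce the container multiplicity (or the size of $|B\setminus A|$) below this scale. Without that, grouping containers by tripartition cannot begin, because the number of containers per tripartition is already far larger than the target count.

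Second, the passage from stability to exactness. Pikhurko's stability gives agreement with $C_n(\P)$ only up to $o(n^3)$ triples, while your counting requires that essentially \emph{every} deviation from $C_n(\P)$ be a rainbow deletion forming a linear system; a single surviving family of $2{+}1$ deletions or insertions at scale $n^{2+\varepsilon}$ would already contribute more than $2^{\Theta(n^2\log n)}$ extra hypergraphs. Your local $4$-set analysis (e.g., that two deleted rainbow triples sharing a pair leave an induced $E_1$ on $\{u_1,u_2,v,w\}$) is correct and is essentially the observation the paper uses to show $Q(n)\subseteq\F(n)$, but it only constrains hypergraphs that are \emph{already} exact subconfigurations of $C_n(\P)$; it does not by itself repair an $o(n^3)$-sized symmetric difference. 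You acknowledge that the required bootstrapping/cleaning argument is the delicate point and do not supply it. Since that step, together with the container issue above, is precisely what stands between the known $2^{\Theta(n^2\log n)}$ bound and the conjecture, the proposal does not constitute a proof.
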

	The motivation behind this conjecture comes from similar results. In particular, Person and Schacht~\cite{MR2809321} proved that almost all Fano-plane free $3$-graphs are bipartite, and Balogh and Mubayi~\cite{MR2927636} proved that almost all $F_5$-free triple systems are tripartite, where $F_5$ is the $5$-vertex $3$-graph with edge set $\{123,124,345\}$. See also \cite{MR2771596} for results along the same line.

	%The so-called \emph{degenerate} case, when a bipartite graph is forbidden, seems to be harder, and there are many milestone results, such as \cite{MR2776642,MR4073190,MR676877,MR3505750}. Similar to the bounds we will be presenting, these bounds are best possible up to multiplicative constant factors in the exponent. The degenerate cases for hypergraphs also got some attention, see \cite{MR3906641,Spiro}.

	The problem of counting $r$-graphs which are free of forbidden structures dates back to the work of Erd\H{o}s, Kleitman and Rothschild~\cite{EKR} in the context of graphs.
	They showed that the number of $K_s$-free graphs on $n$ vertices is $2^{(1+o(1))\ex(n,K_s)}$.
	Their work was later extended to all non-bipartite graphs by Erd\H{o}s, Frankl and Rödl~\cite{EFR} using the Szemerédi regularity lemma. 
	For other related results, see~\cite{MR3906641,MR2776642,EFR,MR4073190,MR676877,MR3505750,Spiro}.
	In a sequence of papers~\cite{PromelSteger1, MR1139486, PromelSteger2}, Prömel and Steger considered the corresponding problem for induced graphs.
	Their results were stated in terms of a different notion of extremal number, which was latter generalised by Dotson and Nagle~\cite{MR2482964} as follows.
	Given a family of $r$-graphs $\mathcal{F}$, let $M$ and $N$ be sets in $[n]:= \{1,\ldots,n\}$ with the following properties: (i) $M \cap N = \emptyset$; and (ii) for $G \se \binom{[n]}{r}\setminus (M \cup N)$, the $r$-graph $G \cup M$ is induced $\F$-free.
	The notation $\binom{[n]}{r}$ stands for $\{S \se [n]: |S| = r\}$. 
	The $*$-extremal number $\ex^{*}(n,\F)$ is defined as
	\begin{align*}
		\ex^{*}(n,\F) := \binom{n}{3}-\min \limits_{M,N} \Big (|M|+|N| \Big),
	\end{align*}
	where the minimum is over all sets $M, \, N \se [n]$ satisfying (i) and (ii).
	%It is not hard to see that the number of induced $\F$-free graphs on $n$ vertices is at least $2^{\ex^{*}(n,\F)}$.
	In 1992, Prömel and Steger~\cite{MR1139486} showed that the number of induced $F$-free graphs on $n$ vertices is $2^{\ex^{*}(n,\F)+o(n^2)}$. This result was later extended by  Alekseev~\cite{MR1181539} and Bollobás and Thomason~\cite{MR1425205} for families of graphs, and by Kohayakawa, Nagle and Rödl~\cite{MR1967402} for $3$-graphs.
	In 2009, Dotson and Nagle~\cite{MR2482964} 
	generalized these results, showing that for all families of $r$-graphs $\F$ the number of induced $\F$-free $r$-graphs is $2^{\ex^{*}(n,\F)+o(n^r)}$.
	
	For a family $\F$ of $r$-graphs such that $\ex^{*}(n,\F)=o(n^{r})$, the counting results mentioned above are not precise.
	In the case of graphs, Alon, Balogh, Bollob\'as and Morris~\cite{MR2763071}
	obtained a more refined result. They showed that the number of induced $\F$-free graphs on $n$ vertices is $2^{\ex^{*}(n,\F)+O(n^{2-\eps})}$, where $\varepsilon>0$ depends only on the family $\mathcal{F}$. Terry~\cite{MR3787371} generalized this result to finite relational languages which in particular covers $r$-graphs. For a family of $r$-graphs $\mathcal{F}$, her result says that the number of induced $\mathcal{F}$-free 3-graphs is either either $2^{\Theta(n^r)}$ or there exists $\varepsilon > 0$ such that for all large enough $n$, the number of induced $\mathcal{F}$-free 3-graphs is at most $2^{n^{r-\varepsilon}}$. 
%	Our next theorem improves the results in~\cite{MR1181539, MR2763071, MR1425205, MR2482964, MR1967402} for some families of $r$-graphs.
%	Our bounds are best possible in the exponent when the family of $r$-graphs considered has $r+1$ vertices,
%	see Section~\ref{counting:sharpness} for more details.

	 %It follows from Theorem~\ref{maincounting} that there are surprisingly few such induced $\{E_1,K_4^3\}$-free $3$-graphs. This indicates that we are still far away from solving Tur\'an's tetrahedron problem. 
\subsection{Our results}
Our first theorem determines the number of $\{K_4^{3}, E_1\}$-free graphs up to a constant factor on the exponent, making progress towards Conjecture~\ref{conj:14-freecounting}.
	 \begin{theorem}
	 	\label{maincountingcorl}
	 	The number of $\{K_4^{3}, E_1\}$-free $3$-graphs on $n$ vertices is
	 	$2^{\Theta(n^2 \log n)}$.
	 \end{theorem}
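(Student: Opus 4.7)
The plan is to establish matching lower and upper bounds of $2^{\Theta(n^2\log n)}$, with the lower bound essentially given by the family $Q(n)$ and the upper bound requiring the bulk of the work. For the lower bound, $|Q(n)|\geq 2^{\Omega(n^2\log n)}$ follows from counting partial Steiner triple systems on the rainbow part of $C_n$, which is a balanced tripartite $3$-graph with parts of size $\lfloor n/3\rfloor$: the number of such partial designs is $n^{\Theta(n^2)}$. One checks $Q(n)\subseteq\{K_4^3,E_1\}$-free by inspecting $4$-set types in $C_n$; the only delicate type is $(2,1,1)$, where the two rainbow edges among the three present in $C_n$ share a pair of vertices, so linearity forces at most one of them to be removed, preventing the induced edge count from collapsing to exactly $1$.

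For the upper bound, my plan is to combine the Pikhurko stability theorem with a structural refinement. By stability, any induced $\{K_4^3,E_1\}$-free $H$ with at least $(5/9-\delta)\binom{n}{3}$ edges admits a balanced partition $[n]=V_1\cup V_2\cup V_3$ such that $|E(H)\triangle E(C_n)|\leq\varepsilon n^3$. Sparse $H$ are handled separately: a single edge $\{a,b,c\}$ forces, for every other vertex $v$, at least one of $\{a,b,v\},\{a,c,v\},\{b,c,v\}$ to be an edge, and iterating these constraints either pushes $H$ into the dense regime or leaves only $n^{O(n)}$ highly constrained ``book-like'' configurations, e.g.\ the family $\{\{a,u,v\}: u\in T \text{ or } v\in T\}$ for $a\in[n]$ and $T\subseteq[n]\setminus\{a\}$, which is well below the target count.

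The main technical step, which I expect to be the hardest part, is to sharpen the stability edit distance from $\varepsilon n^3$ to $O(n^2)$, up to the natural freedom of removing a linear $3$-graph from the rainbow part. I would exploit the rigid local constraints imposed by $\{K_4^3,E_1\}$-freeness on each pair's link: for a pair $\{u,v\}$ with $N=N_H(u,v)$, the vertex links $L_u,L_v$ restricted to $[n]\setminus\{u,v\}$ must be edge-disjoint on $N\times N$, must agree on $N^c\times N^c$, and their union must be complete on the bipartite piece $N\times N^c$. Applying these relations to typical pairs across the partition from stability should allow one to relocate $O(1)$ atypical vertices to their correct parts, after which any further deviation from $C_n$ must consist of a linear $3$-graph on rainbow triples plus $O(n^2)$ additional edits; this is exactly where the CSP formulation becomes natural, since each pair constraint is a local admissibility condition and counting admissible link configurations counts compatible hypergraphs.

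Finally, the overall count is bounded by $3^n$ balanced partitions times $\binom{\binom{n}{3}}{O(n^2)}=2^{O(n^2\log n)}$ edit sets per partition, absorbing the negligible sparse contribution; this yields the desired $2^{O(n^2\log n)}$. The refinement step is the main obstacle: soft stability alone gives only $\varepsilon n^3$ error, and extracting an $O(n^2)$ bound requires a delicate iterative cleaning (or entropy) argument leveraging the full strength of the $4$-set link constraints rather than just global edge counts.
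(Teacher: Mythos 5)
Your lower bound coincides with the paper's: both exhibit $2^{\Omega(n^2\log n)}$ members of $Q(n)$ by greedily building partial Steiner systems on the rainbow triples of $C_n$, and your case check that linearity prevents any $4$-set from dropping to exactly one induced edge is the right verification.

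The upper bound, however, contains a genuine gap. Your entire argument hinges on upgrading Pikhurko's stability from edit distance $\varepsilon n^3$ to the statement that every (non-sparse) $\{K_4^3,E_1\}$-free $3$-graph is $C_n$ minus a linear rainbow $3$-graph plus $O(n^2)$ further edits. You do not prove this step --- you explicitly flag it as ``the main obstacle'' --- and it is essentially the structural content of Conjecture~\ref{conj:14-freecounting} (Balogh--Mubayi), which is open; the theorem you are asked to prove is presented in the paper only as progress towards that conjecture, precisely because no such structural classification is available. The pairwise link relations you list are correct local facts, but nothing in your sketch propagates them from typical pairs to a global $O(n^2)$ bound, and the regime between ``$\varepsilon n^3$-close to extremal'' and the book-like sparse examples is not controlled at all. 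The paper's actual proof avoids stability entirely: it fixes an $(L,k)$-free graph on $[n]\setminus\{v\}$ and bounds the number of extensions to $[n]$ by peeling down to the link of an $(r-1)$-set (Lemma~\ref{recursionfora} and Lemma~\ref{Linkgraph}); there, $3$-goodness of $L=\{1,4\}$ turns each remaining pair of vertices into one binary constraint from $\C$, and Lemma~\ref{CSPcount} shows such a constraint system has at most $m+1$ satisfying assignments. This yields $n^{O(n)}$ extensions per vertex and hence the $2^{O(n^2\log n)}$ upper bound of Theorem~\ref{maincounting} with no structural theorem needed. To salvage your route you would have to actually prove the refined stability statement, which is substantially harder than (and currently beyond) the counting result itself.
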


	 More generally, we also determine the number of induced $\F$-free $3$-graphs on $n$ vertices for all families $\F$ of $3$-graphs on $4$ vertices. 
	 Since any $3$-graph on $4$ vertices is determined by its number of edges,
	 our result is stated in terms of forbidden number of edges.
	 For a set $L \se \{0,1,2,3,4\}$, let $f(n,3,4,L)$ be the number of $3$-graphs on $n$ vertices which do not induce $i \in L$ edges on any set of $4$ vertices.
	 Our result can be stated as follows, where we do not attempt to optimize the constants in the exponent.
	 
	\begin{theorem}\label{theo:tablevalues}
		Let $L \se \{0,1,2,3,4\}$ be a set. Then, the following holds for $n \ge 13$.
		\begin{enumerate}
			\item [$(a)$] If $\{0,4\} \se L$ or $\{1,2,3\} \se L$, then $f(n,3,4,L) \in \{0,1,2\};$

			\item [$(b)$] If $L = \{0,2,3\}$ or $L = \{1,2,4\}$, then $f(n,3,4,L) = n+1;$

			\item [$(c)$] If $L = \{0,1,3\}$ or $L = \{1,3,4\}$, then $f(n,3,4,L) = 2^{\Theta(n\log n)};$
			
			\item [$(d)$] If $L = \{1,3\}$, then $f(n,3,4,L) = 2^{\binom{n-1}{2}};$

			\item [$(e)$] If $L \in \Big\{\emptyset, \{0\}, \{1\}, \{3\}, \{4\}, \{0,1\}, \{3,4\} \Big\}$, then $f(n,3,4,L) = 2^{\Theta(n^3)};$

			\item [$(f)$] For all the remaining cases, we have $f(n,3,4,L) = 2^{\Theta(n^2 \log n)}.$
		\end{enumerate}
	\end{theorem}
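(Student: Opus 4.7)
The plan is to prove Theorem~\ref{theo:tablevalues} by a case analysis on $L$, exploiting the complementation bijection $H \mapsto \binom{V}{3} \setminus H$ which gives $f(n,3,4,L) = f(n,3,4,\bar{L})$ with $\bar{L} := \{4-i : i \in L\}$ and thereby pairs up many sub-cases. Case (a) splits: if $\{1,2,3\} \subseteq L$, every $4$-set has $0$ or $4$ edges, and a direct propagation argument (the presence of an edge in some triple forces, for any fourth vertex, all extensions to be edges; iterating gives the complete $3$-graph; dually for non-edges) shows $f \le 2$. If $\{0,4\} \subseteq L$ but $\{1,2,3\} \not\subseteq L$, every $4$-set has $1$, $2$, or $3$ edges; invoking the hypergraph Ramsey bound $R(K_4^3, K_4^3) \le 13$, which guarantees that every $3$-graph on $n \ge 13$ vertices contains a $4$-set with $0$ or $4$ edges, contradicts the assumption and gives $f = 0$. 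Case (e) is handled by the trivial upper bound $2^{\binom{n}{3}}$ together with a $2^{\Omega(n^3)}$-sized family of valid $3$-graphs in each of the seven sub-cases (for instance, the family of $K_4^3$-free $3$-graphs, counted by Nagle and R\"odl, for $L = \{4\}$). Case (d) reduces to $\mathbb{F}_2$-cohomology: the condition that every $4$-set induces an even number of edges is precisely that the indicator $\chi: \binom{V}{3} \to \mathbb{F}_2$ of $H$ is a $2$-cocycle on the $(n-1)$-simplex; since $H^2(\Delta^{n-1}; \mathbb{F}_2) = 0$, every such $\chi$ is a coboundary $df$ for some $f:\binom{V}{2} \to \mathbb{F}_2$, and the kernel of $d$ on $1$-cochains has size $2^{n-1}$, yielding exactly $2^{\binom{n}{2} - (n-1)} = 2^{\binom{n-1}{2}}$ valid $3$-graphs.

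For the remaining cases (b), (c), (f), the upper bounds come from the general counting theorem alluded to in the abstract, which applies whenever the complement of $L$ contains no three consecutive integers -- a condition to be verified for each relevant $L$. The lower bounds are explicit constructions. In case (b), complementation reduces to $L = \{1,2,4\}$, so every $4$-set has $0$ or $3$ edges; the empty $3$-graph and the $n$ vertex-stars (all triples through a fixed vertex) are $n+1$ valid examples, and a propagation argument -- each $v$ outside a fixed edge $\{a,b,c\}$ omits exactly one of the extensions $\{a,b,v\}, \{a,c,v\}, \{b,c,v\}$, defining a $3$-colouring of $V \setminus \{a,b,c\}$ whose cross-constraints from $4$-sets $\{a,b,v_1,v_2\}$ etc.\ collapse to a single star -- shows these are the only ones. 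In case (c), with representative $L = \{1,3,4\}$ (every $4$-set has $0$ or $2$ edges), partition $V$ into $\lfloor n/4 \rfloor$ quadruples, place one of the seven allowed local configurations (the empty or a $2$-edge $3$-graph on four vertices) inside each quadruple, and add no cross-quadruple edges; every $4$-set then has $0$ or $2$ edges, and the partition itself ranges over a family of size $2^{\Omega(n \log n)}$. In case (f), the representative $L = \{1,4\}$ is Theorem~\ref{maincountingcorl}, whose construction modifies $C_n$ by deleting a linear rainbow $3$-graph -- essentially a partial Latin square of order $n/3$, providing $2^{\Omega(n^2 \log n)}$ examples; analogous partition-plus-linear-removal constructions, tailored to the density pattern dictated by each $L$, handle the other $L$'s in (f).

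The main obstacle is the upper bound in cases (c) and (f): it requires invoking the general constraint-satisfaction counting theorem and carefully verifying its hypothesis (no three consecutive integers in the complement of $L$) and translating its output into the claimed exponent for each $L$. The structural uniqueness argument in case (b) and the enumeration of allowed local configurations in the case~(c) lower bound are technically involved but combinatorially elementary in comparison; cases (a), (d), (e) reduce quickly to Ramsey, cohomology, and standard families respectively.
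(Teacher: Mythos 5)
Most of your cases track the paper (Ramsey for $\{0,4\}\subseteq L$, propagation/clique arguments for (a) and (b), known Tur\'an-type counting results for (e), the $Q(n)$-type construction plus the general CSP counting theorem for (f)), and your $\mathbb{F}_2$-cohomology argument for case (d) is correct and is a genuinely different, cleaner route than the paper's link-graph bijection: the evenness condition on every $4$-set is exactly $d\chi=0$, and acyclicity of the simplex gives $|Z^2|=2^{\binom{n}{2}-(n-1)}=2^{\binom{n-1}{2}}$ directly. However, case (c) has a genuine gap on both sides. For the upper bound, the general counting theorem only yields $f(n,3,4,L)\le 2^{O(n^2\log n)}$ for a $3$-good list (its bound is $2^{2kn^{r-1}+n^{r-1}\log n}$ with $r=3$, $k=4$), which is far weaker than the claimed $2^{O(n\log n)}$; no amount of ``translating its output'' closes that gap. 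The paper instead proves the $2^{\Theta(n\log n)}$ bound for $L=\{0,1,3\}$ structurally: the $3$-graph is determined by the link graph of a single vertex, the admissible link graphs are exactly those whose complement is triangle-free with no induced matching of size $2$, such graphs are $3$-colourable with chain-structured neighbourhoods in each bipartite piece, and chains are counted by $m^{\Theta(m)}$. You need an argument of this kind; the CSP theorem cannot supply it.

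Your lower-bound construction for (c) is also invalid. Take $L=\{1,3,4\}$, a quadruple $Q=\{a,b,c,d\}$ of your partition carrying the two edges $\{a,b,c\}$ and $\{a,b,d\}$, and any vertex $w$ outside $Q$. The $4$-set $\{a,b,c,w\}$ contains exactly one edge of your $3$-graph, namely $\{a,b,c\}$ (every other triple of this $4$-set contains $w$ and hence is not contained in any single quadruple), so it induces exactly $1\in L$ edge. Thus the only member of your family that is actually $(L,4)$-free is the empty $3$-graph, and the $2^{\Omega(n\log n)}$ lower bound does not follow. A correct lower bound can be read off from the paper's characterisation (e.g.\ link graphs arising from chains give $n^{\Omega(n)}$ admissible $3$-graphs). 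Separately, in case (f) your phrase ``analogous partition-plus-linear-removal constructions'' should be made concrete: for every remaining $L$ other than $\{0,3\}$ and $\{1,4\}$ one has, up to complementation, $\{2\}\subseteq L\subseteq\{0,1,2\}$, and removing a linear $3$-graph from the complete $3$-graph (no partition needed) leaves every $4$-set with $3$ or $4$ edges, giving the required $2^{\Omega(n^2\log n)}$ family.
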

Note that some of the statements in Theorem~\ref{theo:tablevalues} are trivial and others are known. We included those for the sake of completeness.

	Before we state our next theorem, we need some notation.
	Let $k, r \in \mathbb{N}$ and $L\subseteq \big\{0,1,\ldots,\binom{k}{r}\big\}$ be a set, which we refer as a \emph{list}.
	We say that an $r$-graph $G$ is \emph{$(L,k)$-free} if for all $i \in L$ there is no set of $k$ vertices in $G$ inducing $i$ edges.
	By generalising our previous notation, we denote by $f(n,r,k,L)$ the number of $(L,k)$-free $r$-graphs on $n$ vertices.
	Our next theorem extends Theorem~\ref{maincountingcorl} to $r$-graphs and \emph{$3$-good lists}.
	We say that a list $L$ is \emph{$3$-good} if $\{i,i+1,i+2\} \cap L \neq \emptyset$ for all $i \in  \big\{0,1,\ldots,\binom{k}{r}-2\big\}$. That is, the complement of $L$ does not contain $3$ consecutive integers. Throughout this paper, all logarithms are in base 2.
		\begin{theorem}
		\label{maincounting}
		Let $n \ge k>r\ge 2$ be integers and $L\subseteq \big\{0,1,\ldots, \binom{k}{r}\big\}$ be a list. If $L$ is $3$-good, then \begin{align*}
			f(n,r,k,L)\leq 2^{2kn^{r-1}+n^{r-1}\log n}. 
		\end{align*}
	\end{theorem}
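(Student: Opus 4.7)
The plan is to prove by induction on $r$ a strengthening of the statement in which each $k$-subset may carry its own $3$-good forbidden list. Concretely, for any family $\{L_S : S \in \binom{[n]}{k}\}$ with each $L_S \subseteq \{0, \ldots, \binom{k}{r}\}$ being $3$-good, I would bound the number of $r$-graphs $G$ on $[n]$ satisfying $e_G(S) \notin L_S$ for every $S$ by $2^{2kn^{r-1} + n^{r-1}\log n}$; Theorem~\ref{maincounting} is then the special case $L_S = L$ for all $S$. This strengthening is forced on us by the recursion: the shifted lists that show up at the next level genuinely depend on $S$.

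For the inductive step, fix the vertex ordering $v_1, \ldots, v_n$, set $V_i = \{v_1, \ldots, v_i\}$, and encode $G$ through the sequence of links $\ell_1, \ldots, \ell_n$, where $\ell_i$ is the $(r-1)$-graph on $V_{i-1}$ consisting of the $(r-1)$-subsets $T$ with $T \cup \{v_i\} \in E(G)$. Given $G[V_{i-1}]$, the requirement $e_G(S) \notin L_S$ for a $k$-subset $S \ni v_i$ with $T = S \setminus \{v_i\}$ becomes $e_{\ell_i}(T) \notin L'_T$, where $L'_T := \bigl(L_S - e_G(T)\bigr) \cap \bigl\{0, \ldots, \binom{k-1}{r-1}\bigr\}$. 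Since $0 \le e_G(T) \le \binom{k-1}{r}$, the range $\{0, \ldots, \binom{k-1}{r-1}\}$ is contained in the shifted universe $\{-e_G(T), \ldots, \binom{k}{r} - e_G(T)\}$, and so $L'_T$ inherits $3$-goodness within $\{0, \ldots, \binom{k-1}{r-1}\}$. Counting valid $\ell_i$ is therefore an instance of the strengthened problem on $V_{i-1}$ at parameters $(r-1, k-1)$ with per-$(k-1)$-subset $3$-good lists $L'_T$, and the inductive hypothesis bounds this count by $2^{2(k-1)(i-1)^{r-2} + (i-1)^{r-2}\log(i-1)}$. Summing over $i = k, \ldots, n$ and applying $\sum_i (i-1)^{r-2} \le n^{r-1}/(r-1)$, $\sum_i (i-1)^{r-2}\log(i-1) \le n^{r-1}\log n/(r-1)$, together with the inequality $2(k-1)/(r-1) \le 2k$ valid for $r \ge 2$, yields the claimed bound on $\log f(n)$.

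The main obstacle is the base case $r = 1$, to which the recursion descends: given arbitrary $3$-good lists $L_S \subseteq \{0, \ldots, k\}$ on the $k$-subsets of $[N]$, one must bound the number of $\ell \subseteq [N]$ with $|\ell \cap S| \notin L_S$ for every $S$ by $2^{2k + \log N}$. The key structural input here is that every $3$-good $L_S \subseteq \{0, \ldots, k\}$ must intersect both $\{0, 1, 2\}$ and $\{k-2, k-1, k\}$, which rules out all ``middle'' profiles: any valid $\ell$ is forced to satisfy $|\ell| \le f(k)$ or $|\ell| \ge N - f(k)$ for some function $f$ of $k$ alone, since otherwise $|\ell \cap S|$ attains every value in $\{0, \ldots, k\}$ as $S$ varies and in particular hits $\min L_S$ or $\max L_S$ for a suitable $S$. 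I would then classify each $S$ by which elements of $\{0, 1, 2\}$ and $\{k-2, k-1, k\}$ lie in $L_S$ and bound the number of consistent $\ell$'s in each extremal regime by a polynomial in $N$ with constants depending only on $k$; assembled, these give the required base-case estimate.
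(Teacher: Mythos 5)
Your reduction step is sound and takes a genuinely different route from the paper's: you induct on the uniformity $r$ via vertex exposure of links, strengthening to per-$k$-set lists, whereas the paper fixes a core $(r-1)$-set $A$, inducts on its co-dimension via the quantities $d(a,n)$, and funnels everything into a single base case. Your verification that the shifted lists $L'_T$ inherit $3$-goodness is correct, and the per-set strengthening is indeed forced. The gap is entirely in your base case $r=1$, which is where all the combinatorial content lives. First, the structural dichotomy you propose --- every valid $\ell$ satisfies $|\ell|\le f(k')$ or $|\ell|\ge N-f(k')$ --- is false once the lists vary with $S$, and they must vary, since the recursion produces genuinely $S$-dependent shifts. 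Counterexample with $k'=2$ (the value reached from $k=r+1$, e.g.\ the $\{K_4^3,E_1\}$ case): split $[N]=A\cup B$ into two halves, set $L_{\{u,v\}}=\{1\}$ for pairs inside $A$ or inside $B$ and $L_{\{u,v\}}=\{0,2\}$ for crossing pairs; every list is $3$-good, yet the valid sets are exactly $\ell=A$ and $\ell=B$, both of size $N/2$. Your argument that ``$|\ell\cap S|$ hits $\min L_S$ or $\max L_S$ for a suitable $S$'' silently assumes that the $S$ realizing a given intersection size is the same $S$ whose list contains that size.

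Second, even in regimes where a size restriction does hold, bounding each regime by ``a polynomial in $N$ with constants depending only on $k$'' is not good enough: the sets with $|\ell|\le f(k')$ already number roughly $N^{f(k')}$ a priori, and your own summation shows that a base case of order $N^{d}$ propagates to roughly $2^{d\,n^{r-1}\log n/(r-1)!}$, which overshoots the stated bound unless $d=1$ (this is visible already at $r=2$, where the base case is the whole proof). What actually rescues the base case is to fix the colors of $k'-2$ of the $N$ vertices ($2^{k'-2}$ choices); each remaining pair $\{i,j\}$, completed to a $k'$-set by the fixed vertices, then yields via $3$-goodness at least one forbidden value of $\mathbf{1}[i\in\ell]+\mathbf{1}[j\in\ell]$, and a binary CSP with one forbidden value of this sum per pair has at most $N+1$ satisfying assignments --- this is exactly the paper's Lemma~\ref{CSPcount}, proved by induction on $N$. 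With that lemma supplying the base case, your link recursion goes through and gives a valid, arguably more transparent, alternative proof.
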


	%Note that $f(n,r,k,L)=f(n,r,k,L^c)$.
	
	The main tool behind the proof of Theorem~\ref{maincounting} is a lemma which counts the number of solutions of a certain constraint satisfaction problem, see Lemma~\ref{CSPcount}. 
	For $L=\{2,3,\ldots,r+1\}$, we observe that $f(n,r,r+1,L)$ is equal to the number of $r$-graphs such that, for every pair of edges, the size of their intersection is not $r-1$.
	This is related to the problem of counting designs, a heavily studied object in combinatorics, see~\cite{MR3779688,MR2961567,MR3124689}.

	The rest of this paper is organized as follows. In Section~\ref{counting:sharpness} we discuss the sharpness of Theorem~\ref{maincounting}; in Section~\ref{sec:maincount} we present the proof of Theorem~\ref{maincounting}; in Section~\ref{sec:tableexplanation} we prove Theorems~\ref{maincountingcorl} and~\ref{theo:tablevalues}.
	
	\section{Sharpness discussion of Theorem~\ref{maincounting}}
	\label{counting:sharpness}
	In this section, we provide three examples which show that Theorem~\ref{maincounting} is sharp for $k=r+1$.
	Our first lemma shows that there is a $3$-good list that achieves the upper bound given by Theorem~\ref{maincounting}.

	\begin{lemma}
		\label{listsharp1}
		For $r\geq 2$ we have
		\begin{align*}
			f(n,r,r+1,\{2,3,\ldots,r+1\})=2^{\Theta(n^{r-1}\log n)}.
		\end{align*}
	\end{lemma}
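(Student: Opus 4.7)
The plan is to establish matching upper and lower bounds. The upper bound is immediate from Theorem~\ref{maincounting}: the list $L=\{2,3,\ldots,r+1\}\subseteq\{0,1,\ldots,\binom{r+1}{r}\}$ has complement $\{0,1\}$, which trivially avoids three consecutive integers, so $L$ is $3$-good. Applying Theorem~\ref{maincounting} with $k=r+1$ yields
\begin{equation*}
f(n,r,r+1,L)\le 2^{2(r+1)n^{r-1}+n^{r-1}\log n}=2^{O(n^{r-1}\log n)}.
\end{equation*}

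For the lower bound I would first reformulate the condition: an $r$-graph $G$ is $(L,r+1)$-free if and only if every $(r+1)$-vertex subset of $V(G)$ spans at most one edge, equivalently any two distinct edges of $G$ share at most $r-2$ vertices, equivalently every $(r-1)$-subset of $V(G)$ is contained in at most one edge of $G$. To produce many such hypergraphs, I assume $r\mid n$ (discarding up to $r-1$ vertices costs only constants in the exponent), set $m=n/r$, and partition $[n]=V_1\sqcup\cdots\sqcup V_r$ with $|V_i|=m$. For each Latin hypercube $\phi\colon V_1\times\cdots\times V_{r-1}\to V_r$, by which I mean a map such that fixing any $r-2$ of its inputs and varying the remaining one induces a bijection onto $V_r$, let
\begin{equation*}
H_\phi=\bigl\{\{v_1,\ldots,v_{r-1},\phi(v_1,\ldots,v_{r-1})\}:v_i\in V_i\text{ for each }i\in[r-1]\bigr\}.
\end{equation*}
If two edges of $H_\phi$ shared $r-1$ vertices, they would agree in $r-1$ partite classes and the Latin property of $\phi$ would then force agreement in the remaining class, collapsing the two to a single edge. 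Hence each $H_\phi$ satisfies the required condition, and distinct $\phi$'s yield distinct $H_\phi$'s since $\phi$ is recoverable from the edge set.

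It remains to bound from below the number of Latin hypercubes of order $m$ and dimension $r-1$ by $2^{\Omega(m^{r-1}\log m)}$. For $r=3$ this reduces to the classical count of Latin squares; a row-by-row application of the Falikman--Egorychev (van der Waerden) permanent lower bound gives $L(m)\ge (m!)^{2m}/m^{m^2}=2^{m^2\log m\,(1-o(1))}$. For general $r$ I would iterate the argument slice by slice, filling one $(r-2)$-dimensional hyperplane at a time and invoking a van der Waerden-type bound at each step, or alternatively cite the Linial--Luria enumeration of high-dimensional permutations; either route gives $2^{\Omega(m^{r-1}\log m)}=2^{\Omega(n^{r-1}\log n)}$ distinct hypergraphs $H_\phi$. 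This permanent-type enumeration in dimensions beyond two is the main technical hurdle; everything else is routine bookkeeping.
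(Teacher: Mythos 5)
Your upper bound is exactly the paper's: $L=\{2,\ldots,r+1\}$ is $3$-good and Theorem~\ref{maincounting} with $k=r+1$ gives $2^{O(n^{r-1}\log n)}$. Your reformulation of the lower bound (every $(r-1)$-set lies in at most one edge) also matches the paper, and your $r$-partite Latin-hypercube construction is a valid family of such hypergraphs: the verification that distinct edges of $H_\phi$ meet in at most $r-2$ vertices is correct, and distinct $\phi$ give distinct $H_\phi$. For $r=3$ your route is complete, since the Falikman--Egorychev bound does give $2^{(1-o(1))m^2\log m}$ Latin squares.

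The gap is the enumeration of Latin hypercubes for $r\ge 4$, which you yourself flag as ``the main technical hurdle'' but then dismiss as routine. It is not. Linial--Luria prove only an \emph{upper} bound of $\bigl((1+o(1))m/e^{d}\bigr)^{m^{d}}$ on the number of $d$-dimensional permutations; the matching lower bound was a substantial open problem, resolved only via Keevash-type design machinery (the paper cites exactly this, Theorem 6.1 of Keevash's counting-designs work, as one possible route). The naive ``slice by slice'' iteration does not reduce to a permanent bound: when you fill in a new $(r-2)$-dimensional layer you must produce a full $(r-2)$-dimensional permutation consistent with all previous layers, and neither the existence of many such extensions nor even one is guaranteed by van der Waerden-type estimates. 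Moreover, simple algebraic constructions (e.g.\ composing Latin squares) yield only $2^{O(m^{2}\log m)}$ hypercubes, far short of $2^{\Omega(m^{r-1}\log m)}$. The paper avoids all of this with an elementary greedy argument: it builds \emph{partial} Steiner-type systems edge by edge (each new $r$-set meeting every previous edge in at most $r-2$ vertices), observes that at least $\tfrac12\binom{n}{r}$ choices remain for each of $\Omega(n^{r-1})$ steps, and divides by the number of orderings to get $n^{\Omega(n^{r-1})}$ distinct hypergraphs. Since perfect structures are not needed, you should either adopt that greedy count or cite a genuine lower bound on the number of designs/high-dimensional permutations rather than Linial--Luria.
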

	
	\begin{proof}
		The list $L := \{2,3,\ldots,r+1\}$ is $3$-good and therefore Theorem~\ref{maincounting} can be applied, which gives the upper bound.
		Now, let $M(n,r)$ be the set of $r$-graphs on $[n]$ such that every $(r-1)$-subset\footnote{A $t$-set or $t$-subset is a set with exactly $t$ elements.} of $[n]$ is contained in at most one edge. 
		Note that the number of $(L,r+1)$-free $r$-graphs on $[n]$ is equal to $|M(n,r)|$. Thus, to lower bound $f(n,r,r+1,L)$ it suffices to show that $|M(n,r)|=2^{\Omega(n^{r-1}\log n)}$.

		One way to lower bound $M(n,r)$ is to use a result of Keevash~(Theorem 6.1 in \cite{MR3779688}) on \emph{designs}.
		An $r$-graph $G$ on $[n]$ is an $(n, r, t, \lambda)$-\emph{design} if every $t$-subset of $[n]$ is contained in exactly $\lambda$ elements of $E(G)$. 
		If certain divisibility conditions involving $n$ and $r$ are satisfied, then Keevash's result implies that the number of $(n,r,r-1,1)$-designs is $2^{\Theta(n^{r-1}\log n)}$. In particular, we have $|M(n,r)|\geq 2^{\Omega(n^{r-1}\log n)}$.
		For the sake of self-completeness, we present here another simple way to derive this inequality. To do so, we build a subfamily of $r$-graphs in $M(n,r)$ via the following greedy procedure. 
		Let $e_1$ be an $r$-subset in $[n]$.
		For $i>1$, let $e_i$ be an $r$-subset in $[n]$ such that $|e_i \cap e_j| \neq r-1$ for all $j \in [i-1]$.
		The procedure stops when an edge $e_i$ with this property cannot be found.
		As there are at most $rni$ sets of size $r$ which intersect some element of $\{e_1,\ldots, e_{i-1}\}$ in exactly $r-1$ vertices, we have at least $\binom{n}{r}-nri$ choices for $e_i$.
		It follows that the procedure lasts for at least $\frac{n^{r-1}}{2r^{r+1}}$ steps.
		As we have $\binom{n}{r}-nri \ge \frac{1}{2}\binom{n}{r}$ for all $i \le \frac{n^{r-1}}{2r^{r+1}}$, it follows that the number of $r$-graphs in $M(n,r)$ is at least
		\begin{align*}
		|M(n,r)|\geq 	\frac{\left(\frac{1}{2}\binom{n}{r}\right)^{\frac{n^{r-1}}{2r^{r+1}}}}{\left(\frac{n^{r-1}}{2r^{r+1}}\right)!} \geq \left(\frac{r^{r+1}\binom{n}{r}}{n^{r-1}}  \right)^{\frac{n^{r-1}}{2r^{r+1}}} \geq n^{\frac{n^{r-1}}{2r^{r+1}}}=2^{\Theta(n^{r-1}
				\log n)}.
		\end{align*}
		 The factorial term above takes the double counting into consideration.
		 Combining this bound with the upper bound from Theorem~\ref{maincounting}, we obtain $f(n,r,r+1,L)=2^{\Theta(n^{r-1}\log n)}$.
	\end{proof}

Our next example shows that there is a list $L$ which is not $3$-good such that the bound presented in Theorem~\ref{maincounting} does not hold.
	\begin{lemma}
		\label{listsharp2}
		For $r\geq 2$ we have
		\begin{align*}
			f(n,r,r+1,\{3,4,\ldots,r+1\})=2^{\Theta(n^{r})}.
		\end{align*}
	\end{lemma}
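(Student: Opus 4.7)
Observe first that an $r$-graph is $(\{3,4,\ldots,r+1\},r+1)$-free if and only if every $(r+1)$-subset of its vertex set contains at most $2$ edges. The upper bound is then immediate, since $f(n,r,r+1,L) \le 2^{\binom{n}{r}} = 2^{O(n^r)}$. So the entire content of the lemma is the matching lower bound $f(n,r,r+1,L) \ge 2^{\Omega(n^r)}$.

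For this, my plan is to use a simple $r$-partite construction that generalises the bipartite/triangle-free example at $r=2$. Partition $[n]$ into classes $V_1, \ldots, V_r$ of sizes $\lfloor n/r \rfloor$ or $\lceil n/r \rceil$, and let $\mathcal{E}$ be the collection of \emph{crossing} $r$-subsets, namely those $e \se [n]$ with $|e \cap V_j| = 1$ for every $j \in [r]$. I will argue that every subfamily $H \se \mathcal{E}$, viewed as an $r$-graph on $[n]$, is $(L,r+1)$-free. Since $|\mathcal{E}| = \prod_{j=1}^{r} |V_j| = \Theta(n^r)$, this immediately produces $f(n,r,r+1,L) \ge 2^{|\mathcal{E}|} = 2^{\Omega(n^r)}$.

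To justify the claim, fix an arbitrary $(r+1)$-set $T \se [n]$ and write $i_j := |T \cap V_j|$, so $i_j \ge 0$ and $\sum_{j=1}^{r} i_j = r+1$. An $r$-subset of $T$ has the form $T \setminus \{v\}$ for a unique $v \in T$, and lies in $\mathcal{E}$ precisely when $v$ belongs to some class $V_{j_0}$ with $i_{j_0} = 2$ while $i_j = 1$ for every $j \ne j_0$. If $(i_1,\ldots,i_r)$ is a permutation of $(2,1,\ldots,1)$ this gives exactly two valid choices of $v$, hence exactly two crossing $r$-subsets of $T$; in any other distribution some $i_j$ is either $0$ or at least $3$, and no $r$-subset of $T$ is crossing. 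Either way, $T$ contains at most $2$ edges of $H$.

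I anticipate no real obstacle here: the whole argument is a short case analysis. The conceptual point is that, unlike the $3$-good case handled by Theorem~\ref{maincounting}, forbidding only the counts $3,4,\ldots,r+1$ on $(r+1)$-sets still permits the full $r$-partite $r$-graph together with every one of its subgraphs, which by itself produces $2^{\Theta(n^r)}$ distinct $(L,r+1)$-free $r$-graphs and saturates the trivial upper bound up to the constant in the exponent.
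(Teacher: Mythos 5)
Your proposal is correct and uses exactly the construction in the paper: the complete $r$-partite $r$-graph on balanced classes, all of whose subgraphs are $(L,r+1)$-free, giving the lower bound $2^{\Theta(n^r)}$ against the trivial upper bound. Your case analysis of the class-size vector $(i_1,\ldots,i_r)$ on an $(r+1)$-set just spells out the verification the paper leaves implicit.
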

	\begin{proof}
		Consider an $r$-partition $(V_i)_{i\in[r]}$ of the vertex set $[n]$
		such that $\big||V_i|-|V_j|\big|\le 1$ for all $i,j \in [r]$.
		Let $G$ be the $r$-graph whose set of edges is given by the $r$-sets with one vertex in each class $V_i$.
		This $r$-graph has $\Theta(n^r)$ edges and every subgraph of it is $(L,r+1)$-free, where $L:=\{3,4,\ldots,r+1\}$. We conclude that
		$f(n,r,r+1,L)=2^{\Theta(n^{r})}$.
	\end{proof}

	When $k = n$, there is a $3$-good list $L\se \left \{0,1,2,\ldots,\binom{n}{r}\right\}$  such that the bound on $f(n,r,n,L)$ given by Theorem~\ref{maincounting} is sharp.
	\begin{claim}
		\label{listsharp3}
		Let $r\geq 2$ and $L$ be the set of all odd integers in $ \left \{0,1,2,\ldots,\binom{n}{r}\right\}$. Then, we have
		\begin{align*}
			f(n,r,n,L)=2^{\Theta(n^{r})}.
		\end{align*}
	\end{claim}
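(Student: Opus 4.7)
The plan is to reduce the claim to a simple parity count. When $k = n$, the only $k$-subset of $[n]$ is $[n]$ itself, so an $r$-graph $G$ on $[n]$ is $(L,n)$-free precisely when $|E(G)| \notin L$. Since $L$ consists of all odd integers in $\{0,1,\ldots,\binom{n}{r}\}$, this is equivalent to saying $|E(G)|$ is even. Thus $f(n,r,n,L)$ equals the number of $r$-graphs on $[n]$ whose edge count is even.

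For the upper bound, I would simply note that $f(n,r,n,L) \le 2^{\binom{n}{r}} = 2^{\Theta(n^r)}$, which already gives what we need. (Alternatively, one can check that $L$ is $3$-good: among any three consecutive integers at least one is odd, so $\{i,i+1,i+2\} \cap L \neq \emptyset$. Hence Theorem~\ref{maincounting} applies with $k=n$ and yields the same $2^{O(n^r)}$ bound.)

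For the lower bound, I would use the standard parity involution. Fix any $r$-subset $e_0 \in \binom{[n]}{r}$ and consider the map $G \mapsto G \triangle \{e_0\}$ on the set of all $r$-graphs on $[n]$. This is a fixed-point-free involution that flips the parity of $|E(G)|$, so it pairs even-edge graphs with odd-edge graphs. Therefore the number of even-edge $r$-graphs on $[n]$ is exactly $2^{\binom{n}{r}-1} \ge 2^{\Omega(n^r)}$.

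Combining the two bounds yields $f(n,r,n,L) = 2^{\Theta(n^r)}$. There is no real obstacle here — the only subtlety worth flagging is the trivial observation that $k=n$ collapses the local forbidden-induced-subgraph condition to a single global constraint on the total number of edges, after which the parity argument is immediate.
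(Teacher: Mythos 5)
Your proof is correct and follows essentially the same route as the paper: observing that $k=n$ reduces the condition to requiring an even total number of edges, and then counting such $r$-graphs. The paper simply asserts that there are $2^{\Theta(n^r)}$ of these; your parity involution makes the count exact at $2^{\binom{n}{r}-1}$, which is a harmless elaboration of the same argument.
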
		
	
	\begin{proof}
		The number of $(L,n)$-free $r$-graphs on $[n]$ is equal to the number of $r$-graphs on $[n]$ with an even number of edges. Clearly, there are $2^{\Theta(n^r)}$ of those.
	\end{proof}
	\section{Proof of Theorem~\ref{maincounting}}
	\label{sec:maincount}
	
	We will start by proving a combinatorial lemma. To state it we use the language of \emph{constraint satisfaction problems} (CSP). 
	Let $\P$ be the family of all subsets of $\{0,1\} \times \{0,1\}$. We refer to the elements of $\P$ as \emph{constraints}.
	A CSP on $[m]$ is a pair $([m],f)$, where $f: \binom{[m]}{2} \rightarrow \P$ is a function assigning a constraint for each pair of vertices. 
	An \emph{assignment} on $[m]$ is a function $g: [m] \rightarrow \{0,1\}$ which assigns for every vertex $v\in [m]$ an integer (or \emph{color}) from $\{0,1\}$. We say that an assignment $g: [m] \rightarrow \{0,1\}$ is \emph{satisfying} for $([m],f)$ if we have
	$(g(a),g(b)) \notin f(\{a,b\})$ for any pair $a,b\in[m]$ such that $a<b$.

	For a CSP $G = ([m],f)$, let $A(G)$ be the set of satisfying assignments for $G$. 
	Now, define 
	\begin{align*}
		\mathcal{C}:=\bigg\{\Big\{(1,0),(0,1)\Big\},\Big\{(0,0)\Big\},\Big\{(1,1)\Big\}\bigg\}.
	\end{align*}
	Observe that $\C$ is a subset of $\P$. 
	Our next lemma shows that for $f: \binom{[m]}{2} \rightarrow \mathcal{C}$ the CSP $G=([m],f)$ satisfies $|A(G)| \le m+1$.
	This bound is best possible, as we can see by the following example. 
	Let $f: \binom{[m]}{2} \rightarrow \mathcal{C}$ be the function given by $f(\{i,j\}) = \{(1,1)\}$ for all $i < j$ and let $G = ([m],f)$ be the corresponding CSP. 
	A function $g: [m] \to \{0,1\}$ is a satisfying assignment for $G$ if and only if $|g^{-1}(1)|\le 1$. 
	As there are exactly $m+1$ choices of $g^{-1}(1)$ for which $|g^{-1}(1)|\le 1$, we have $|A(G)|=m+1$.

	\begin{lemma}
		\label{CSPcount}
		Let $m \in \mathbb{N}$, $f: \binom{[m]}{2} \rightarrow \mathcal{C}$ and $G = ([m],f)$ be a CSP.
		The size of the set $A(G)$ of satisfying assignments for $G$ is bounded by 
		\begin{align*}
			|A(G)|\leq m+1.
		\end{align*}
	\end{lemma}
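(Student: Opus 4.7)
The plan is to induct on $m$. For the base case $m=1$, there are no pairs to constrain, and both constant assignments $g\equiv 0$ and $g\equiv 1$ are satisfying, giving exactly $m+1=2$ satisfying assignments.

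For the inductive step, I would condition on the value of $g(m)$ and partition the other vertices according to the constraint they share with $m$: let
$A=\{a\in[m-1]:f(\{a,m\})=\{(0,0)\}\}$, $B=\{b\in[m-1]:f(\{b,m\})=\{(1,1)\}\}$, and $C=\{c\in[m-1]:f(\{c,m\})=\{(1,0),(0,1)\}\}$,
so $|A|+|B|+|C|=m-1$. A direct inspection of the three constraint types shows that when $g(m)=0$ the values of $g$ on $A$ and $C$ are completely forced (namely $g|_A\equiv 1$ and $g|_C\equiv 0$) while $g|_B$ remains free subject only to the sub-CSP structure on $B$; symmetrically, when $g(m)=1$ the roles of $A$ and $B$ swap and $g|_C\equiv 1$ instead.

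Writing $N_0,N_1$ for the number of satisfying assignments with $g(m)=0$ and $g(m)=1$, each satisfying assignment of the sub-CSP $([B],f|_{\binom{B}{2}})$ extends to at most one satisfying assignment of $G$ with $g(m)=0$ (the values on $A\cup C\cup\{m\}$ are determined and then checked against the cross-constraints), and analogously for $A$ when $g(m)=1$. Both sub-CSPs have the same form as $G$ and strictly fewer vertices, so the induction hypothesis gives $N_0\le|B|+1$ and $N_1\le|A|+1$. Summing,
\[|A(G)|=N_0+N_1\le|A|+|B|+2=(m-1-|C|)+2=m+1-|C|\le m+1,\]
which closes the induction.

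The main obstacle is that the naive conditioning on $g(m)$ only gives $N_0+N_1\le 2m$, which is too weak. The whole saving comes from the fact that every vertex in $C$ is forced under \emph{both} choices of $g(m)$ (to $0$ in one case and to $1$ in the other), so its $|C|$ potential degrees of freedom are absorbed and counted at neither recursive step; this shared forcing is precisely what brings the bound down from $2m$ to $m+1$. The only delicate point is the case-by-case verification of which value each constraint type forces---in particular keeping track that, with the convention $(g(a),g(b))$ for $a<b$, the constraint $\{(1,0),(0,1)\}$ encodes ``$g(a)=g(b)$''---but once the partition $A,B,C$ is identified the rest of the argument is a routine accounting.
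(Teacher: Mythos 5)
Your proof is correct and takes essentially the same approach as the paper: induction on $m$, conditioning on $g(m)$, and identifying the two disjoint ``free'' sets (your $A$ and $B$ are the paper's $F_1$ and $F_0$), with the equality-constrained vertices forced under both choices so that $N_0+N_1\le |A|+|B|+2\le m+1$. The case analysis of which constraint forces which value matches the paper's exactly.
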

	\begin{proof}
		We proceed by induction on $m$. 
		The base case is $m=1$. For a CSP $G$ on one vertex we have 
		$|A(G)|\leq 2$, as there is only one vertex to be assigned with a color from $\{0,1\}$. 
		Now, let $m\geq 2$ and assume that for every $i<m$ and every CSP $G$ on $i$ vertices we have $|A(G)|\le i+1$.

		Fix a function $f: \binom{[m]}{2} \rightarrow \mathcal{C}$ and 
		the CSP given by $G=([m],f)$.
		We partition the set of satisfying assignments into $A(G)=A_0 \cup A_1$, where 
		\begin{align*}
			A_0 := \{g \in A(G):g(m) = 0\}\qquad \text{and} \qquad A_1 := \{g \in A(G):g(m) = 1\}.
		\end{align*}
		We gain extra information about the satisfying assignments in each of these sets based on the constraints given by $f$.
		Note that if $f(\{j,m\})=\{(1,0),(0,1)\}$, then we automatically have $g(j)=i$ for all $g \in A_i$ and $i \in \{0,1\}$.
		If $f(\{j,m\})=\{(i,i)\}$, then we must have $g(j)=1-i$ for all $g \in A_i$ and $i \in \{0,1\}$.
		This motivates us to consider the following subsets of $[m]$:
		\begin{align*}
			F_0 := \big\{j<m:f(\{j,m\})=\{(1,1)\}\big\} \qquad \text{and} \qquad
			F_1 := \big\{j<m:f(\{j,m\})=\{(0,0)\}\big\}.
		\end{align*}
		For each $i \in\{0,1\}$, the set $F_i$ corresponds to the values of $j<m$ which are \emph{free}, that is, the values of $j$ for which $g(j)$ might not be the same for all functions $g \in F_i$.
		More precisely, our analysis shows that
		for $g_1,g_2 \in A_i$ and $j \notin F_i$ we have $g_1(j)=g_2(j)$, for $i\in \{0,1\}$.
		
		Let $G[F_0]:=\left(F_0,\restr{f}{\binom{F_0}{2}}\right)$ and  $G[F_1]:=\left(F_1,\restr{f}{\binom{F_1}{2}}\right)$ be the \emph{induced CSP} of $G$ on $F_0$ and $F_1$, respectively.
		It follows that
		\begin{align}\label{disjointsubsets}
			|A(G)|\le |A(G[F_0])|+|A(G[F_1])|.
		\end{align} 
		As $F_0$ and $F_1$ are subsets of $[m-1]$, by the induction hypothesis we have
		\begin{align}
			\label{inducedCSP}
			\big|A\big(G[F_0]\big)\big|\leq |F_0|+1 \quad \text{and} \quad \big|A\big(G[F_1]\big)\big|\leq |F_1|+1.
		\end{align}
		As $F_0$ and $F_1$ are disjoint, we have $|F_0|+|F_1|\le m-1$.
		Combining \eqref{disjointsubsets} and \eqref{inducedCSP}, we obtain 
		\begin{align*}
			|A(G)|\leq |F_0|+1+|F_1|+1\leq m+1.
		\end{align*}
		This completes our proof of Lemma~\ref{CSPcount}.
	\end{proof}
	
	Lemma~\ref{CSPcount} will be used in the proof of Lemma~\ref{Linkgraph} below. %We now proceed to the proof of Theorem~\ref{maincounting}.
	For the rest of this section, we fix natural numbers $k > r$ and a $3$-good list $L \se \big\{0,1,\ldots, \binom{k}{r}\big\}$.
	For simplicity, denote $f(n):=f(n,r,k,L)$ and let $\F(n)$ be the family of $(L,k)$-free $r$-graphs on $[n]$.
	In order to bound $d(n)$, we introduce other related quantities.
	For a set $A \se [n]$ and an $r$-graph $H \in \F(n)$, define
	\begin{align*}
		D(A,H,n):= \Big\{G \in \F(n): A \se e \ \ \, \forall \, e \in E(G) \Delta E(H) \Big\},
	\end{align*}
	where $\triangle$ denotes the symmetric difference.
	It is helpful to think on $D(A,H,n)$ as the set of $(L,k)$-free graphs on $[n]$ for which the edges and non-edges agree with $H$ on the $r$-sets not containing $A$.
	When $|A| = 1$, $D(A,H,n)$ is the set of $r$-graphs which extend $ \restr{H}{[n]\setminus A}$ to an $(L,k)$-free $r$-graph on $[n]$.
	It is also convenient to define
	\begin{align*}
		d(a,n):= \max \big \{|D(A,H,n)|: H \in \F(n) \big\},
	\end{align*}
	where $A$ is any set of size $a$. As the maximum is taken over all $H \in \F(n)$ and the set $\F(n)$ is closed for permuting the vertices of the graphs, it does not depend on the set $A$.
	When $a = 1$, we can think of $d(1,n)$ as the maximum number of extensions that an $r$-graph may have.
	We can easily see that 
	\begin{align}\label{recursionf}
		f(n) \le d(1,n)\cdot f(n-1)
	\end{align}
	for all $n \in \mathbb{N}$.
	Therefore, to bound $f(n)$ we will bound $d(1,n)$ for all $n \in \mathbb{N}$.
	
	%In order to bound $d(n)$, we introduce other related quantities.
	%Let $n, r$ and $i$ be natural numbers such that $n,r \ge i$ and let $H$ be an $r$-graph on $[n]$.
	%Define $D(i,n,H)$ to be the set of $(L,k)$-free $r$-graphs $G$ on $[n]$ for which the following holds.
	%If $e\in E(H)\triangle E(G)$, then $[i]\subseteq e$.
	%For $e \in \binom{[n]}{r}$ which does not contain $[i]$, we have $e \in E(G)$ if and only if $e \in E(H)$.
%	That is, for an $r$-graph $G \in D(i,n,H)$, the edges and non-edges of $G$ and $H$ can only differ in the $r$-sets containing $[i]$.
	%When $i \le n < r$, note that $|D(i,n,H)| = 1$, as the unique $r$-graph on $n$ vertices is the one with no edges.
	%Similarly, we define
	%Note that $D(1,n,H)=D(n,H)$ and hence $d(1,n) = d(n)$.
	
	In order to upper bound $d(1,n)$, we first provide an upper bound on $d(a,n)$ which depends on $d(a+1,v)$, for $v \in \{r,\ldots,n\}$.
	
	\begin{lemma}
		\label{recursionfora}
		Let $n, r, k$ and $a$ be natural numbers such that $k>r$ and $n > r-1 > a$.
		Then, we have 
		\begin{align*}
			d(a,n) \leq  \prod_{v=r}^n d(a+1,v).
		\end{align*}
	\end{lemma}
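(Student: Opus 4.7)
My plan is to establish the one-step inequality
\[ d(a,v) \leq d(a,v-1) \cdot d(a+1,v) \quad \text{for each } v \in \{r, r+1, \ldots, n\}, \]
and then telescope it down to the base case $d(a,r-1)=1$, which is immediate since $[r-1]$ contains no $r$-subset and hence $\F(r-1)=\{\emptyset\}$. Iterating the one-step inequality then gives exactly $d(a,n) \leq \prod_{v=r}^n d(a+1,v)$, as claimed.

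To prove the one-step inequality, fix $v \geq r$, an $r$-graph $H \in \F(v)$, and a set $A \subseteq [v]$ of size $a$. Since $d(a,v)$ does not depend on the particular $A$ of size $a$, I may assume $v \notin A$. For each $G \in D(A, H, v)$, consider its restriction $G'$ to $[v-1]$. Because every $r$-set in $E(G) \Delta E(H)$ contains $A$, the same holds for the restricted symmetric difference, so $G' \in D(A, H', v-1)$ where $H'$ denotes the restriction of $H$ to $[v-1]$. In particular there are at most $d(a,v-1)$ possible restrictions $G'$.

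For a fixed $G'$, I claim that the number of $G \in D(A, H, v)$ restricting to $G'$ is at most $d(a+1, v)$. Since $G'$ fixes $G$ on all $r$-subsets of $[v-1]$, the only potentially free $r$-sets of $G$ are those containing $v$. Moreover, for any such $r$-set $e$ with $A \not\subseteq e$, the condition $G \in D(A,H,v)$ further forces $e \in E(G) \Leftrightarrow e \in E(H)$. Hence the truly free $r$-sets are exactly those containing $A \cup \{v\}$. If no admissible extension exists, the count is zero; otherwise, pick any admissible extension $G_0$, and observe that the set of admissible $G$'s coincides with $D(A \cup \{v\}, G_0, v)$, whose cardinality is at most $d(a+1, v)$.

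The main obstacle is verifying this last identification: any $G$ agreeing with $G_0$ on $r$-sets not containing $A \cup \{v\}$ must restrict to $G'$ on $[v-1]$ and lie in $D(A,H,v)$, and vice versa. Both directions follow by tracking the symmetric differences $E(G)\Delta E(G_0)$ and $E(G_0)\Delta E(H)$ and using that $G_0 \in D(A,H,v)$ is an extension of $G'$. Combining the two counts gives $d(a,v) \leq d(a,v-1) \cdot d(a+1,v)$, which iterates to the claimed bound.
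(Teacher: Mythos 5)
Your proof is correct and follows essentially the same route as the paper: partition $D(A,H,v)$ according to the restriction to $[v-1]$, bound the number of restrictions by $d(a,v-1)$ and each fiber by $d(a+1,v)$ via the set $D(A\cup\{v\},G_0,v)$, then telescope down to $d(a,r-1)=1$. The only (harmless) difference is that you assert the fiber \emph{equals} $D(A\cup\{v\},G_0,v)$ where the paper only uses the containment needed for the upper bound.
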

	\begin{proof}
		%We first bound $d(i,n)$ in terms of $d(i+1,n)$ and $d(i,n-1)$.
		Let $H$ be an $(L,k)$-free $r$-graph on $[n]$ and let $A \se [n-1]$ be a set of size $a$. 
		For an $r$-graph $G$ on $[n]$, denote by $G_{n-1}$ the $r$-graph on $[n-1]$ induced by $G$.
		Observe that if $G \in D(A,H,n)$, then $G_{n-1} \in D(A, H_{n-1}, n-1)$.
		We then partition the set $D(A,H,n)$ accordingly to the $r$-graph induced on the vertex set $[n-1]$. 
		For each $r$-graph  $J \in D(A,H_{n-1},n-1)$, let $T_{H}(A,J,n)$ be the set of $r$-graphs $G$ in $D(A,H,n)$ for which $G_{n-1}=J$. Then,
		\begin{align}
		\label{ineq:DinH}
		 D(A,H,n)=\bigcup_{J}T_H(A,J,n),
		\end{align}
		where the union is over $J \in D(A,H_{n-1},n-1)$.
		
		We claim that $T_H(A,J,n) \se D\big(\{n\}\cup A, G, n\big)$ for all $G \in T_H(A,J,n)$.
		To prove this, let $G$ and $G'$ be $r$-graphs in $T_H(A,J,n)$. As $G_{n-1} = J = G'_{n-1}$, we have $n \in e$ for all $e \in E(H') \Delta E(G)$.
		Moreover, as $G$, $G' \in D(A,H,n)$, we have $A \se e$ for all $e \in E(G) \Delta E(G')$.
		This implies that $\{n\} \cup A \se e$ for all $e \in E(G) \Delta E(G')$, which proves our claim.
		As $|A \cup \{n\}| = a+1$, we obtain
		%Observe that for a graph $G \in T_{H}(i,n,J)$ we have all the information about $r$-sets which do not contain $[i]$ and $r$-sets which do not contain $\{n\}$.
		%That is, for each $r$-graph $G \in T_H(i,n,J)$ and each $e \in \binom{[n]}{r}$ which does not contain $[i] \cup \{n\}$,  we have $e \in E(G)$ if and only if $e \in J \cup H$.The set $T_H(i,n,J)$ has a structure similar to $D(i+1,n,H\cup J)$. For any $r$-graph in $T_{H}(i,n,J)$, we have information about $r$-sets which do not contain $[i]\cup \{n\}$, while for $r$-graphs in $D(i+1,n,H\cup J)$ we have information about $r$-sets which do not contain $[i+1]$.From this, it follows that the maximum of $|T_H(i,n,J)|$ over $(L,k)$-free $r$-graphs $H$ and $J \in D(i,n-1,H_{n-1})$ is equal to $d(i+1,n)$.
		\begin{align}\label{ineq:Tandd}
			|T_{H}(A,J,n)|\le 
			d(a+1,n)
		\end{align}
		for all $H$ and $J \in D(A,H_{n-1},n-1)$. Combining \eqref{ineq:DinH} and \eqref{ineq:Tandd}, we get 
		\begin{align}
		\label{ineq:321}
		    |D(A,H,n)|\le |D(A,H_{n-1},n-1)| \cdot d(a+1,n) \le d(a,n-1) \cdot d(a+1,n).
		\end{align}
		As \eqref{ineq:321} holds for any $(L,k)$-free $r$-graph $H$ on $n$ vertices, \eqref{ineq:321} implies that
		\begin{align*}
			\dfrac{d(a,n)}{d(a,n-1)} \le d(a+1,n).
		\end{align*}
		Let $m\ge r$ be an integer. Performing the telescopic product with $n$ going from $r$ to $m$, we have
		\begin{align*}
			\dfrac{d(a,m)}{d(a,r-1)} \le \prod_{v=r}^m d(a+1,v).
		\end{align*}
		As $d(a,r-1)=1$, this completes our proof.	
	\end{proof}
	
	Recall that we need to bound $d(1,n)$ in order to bound $f(n)$. To do so, we use `backward' induction. 
	If we bound $d(a+1,n)$ for every $n \ge k$, then Lemma~\ref{recursionfora} yields a simple bound on $d(a,n)$ in terms of $d(a+1,n)$.
	Our next lemma concerns the base case, where we bound $d(r-1,n)$.
	
	\begin{lemma}
		\label{Linkgraph}
		Let $n, r$ and $k$ be natural numbers such that $n \ge r \ge 2$ and $k>r$.
		If $L\subseteq\left \{0,1,\ldots,\binom{k}{r} \right \}$ is a $3$-good list, then
		\begin{align*}
			d(r-1,n) \le 2^{k}n.
		\end{align*}
	\end{lemma}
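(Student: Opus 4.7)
The plan is to encode each graph in $D(A,H,n)$ as a Boolean function on $V := [n]\setminus A$ and to control the number of valid encodings via Lemma~\ref{CSPcount}. Fix an $(L,k)$-free $r$-graph $H$ on $[n]$ and a set $A \subseteq [n]$ with $|A|=r-1$. Since every $G \in D(A,H,n)$ agrees with $H$ on all $r$-sets not containing $A$, such a $G$ is determined by the function $g_G \colon V \to \{0,1\}$ with $g_G(v)=1$ iff $A \cup \{v\} \in E(G)$. We may assume $n \ge k-1$, otherwise $|D(A,H,n)| \le 2^{n-r+1} \le 2^k n$ holds trivially.

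First, fix an auxiliary set $B \subseteq V$ of size $k-r-1$ and split $D(A,H,n)$ according to the restriction $g_G|_B = \phi$. For each $\phi \colon B \to \{0,1\}$, set $s_\phi := \sum_{b\in B}\phi(b)$, and for each pair $\{u,v\} \subseteq V\setminus B$ let $S := A\cup B\cup \{u,v\}$, which is a $k$-subset of $[n]$ containing $A$. A direct count shows that the number of edges of $G$ inside $S$ equals $h_S + s_\phi + g_G(u)+g_G(v)$, where $h_S$ is the number of edges of $H$ in $\binom{S}{r}$ that avoid $A$. The $(L,k)$-free condition at $S$ then reads
\[
    g_G(u)+g_G(v) \;\notin\; (L-h_S-s_\phi) \cap \{0,1,2\}.
\]
Since $h_S \le \binom{k}{r}-(k-r+1)$ and $s_\phi \le k-r-1$, we have $h_S+s_\phi \le \binom{k}{r}-2$, so the $3$-good property forces this forbidden set to be nonempty. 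Choosing any element $c_{u,v,\phi}$ of it, the binary constraint $g(u)+g(v) \ne c_{u,v,\phi}$ lies in $\mathcal{C}$ (namely, in $\{(0,0)\}$, $\{(1,0),(0,1)\}$, or $\{(1,1)\}$ according to whether $c_{u,v,\phi}$ equals $0$, $1$, or $2$).

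Assembling these constraints over all pairs in $V\setminus B$ defines a CSP on $|V\setminus B| = n-k+2$ vertices whose satisfying set contains every valid extension of $\phi$ to $V$. Lemma~\ref{CSPcount} then bounds the number of such extensions by $n-k+3$ for each $\phi$, and summing over the $2^{|B|} = 2^{k-r-1}$ colorings of $B$ yields
\[
    |D(A,H,n)| \;\le\; 2^{k-r-1}(n-k+3) \;\le\; 2^k n,
\]
using $r\ge 2$ and $k\ge 3$. The only delicate step, and the sole place where the $3$-good hypothesis is used, is the nonemptiness of the forbidden pair-sum set after conditioning on $\phi$: we need the shift $h_S+s_\phi$ to lie in $\{0,\ldots,\binom{k}{r}-2\}$ so that a window of three consecutive integers remains available in $\{0,\ldots,\binom{k}{r}\}$, which is precisely what the bound $h_S+s_\phi\le\binom{k}{r}-2$ guarantees.
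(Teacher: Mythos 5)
Your proof is correct and follows essentially the same route as the paper: encode extensions over $A$ as a $\{0,1\}$-coloring, condition on a partial coloring of an auxiliary $(k-r-1)$-set, use $3$-goodness to extract a constraint from $\mathcal{C}$ for each remaining pair, and apply Lemma~\ref{CSPcount}. The accounting of the shift ($h_S+s_\phi\le\binom{k}{r}-2$, matching the paper's $\|R_{i,j}\|_c$) and the final summation over the $2^{k-r-1}$ partial colorings are exactly as in the paper's argument.
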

	\begin{proof}
		Let $H$ be an $(L,k)$-free $r$-graph. Recall that $D\big([r-1],H,n\big)$ is the set of $(L,k)$-free $r$-graphs $G$ on $[n]$ for which the following holds. If $e\in E(H)\triangle E(G)$, then $[r-1]\subseteq e$. We associate the problem of counting the $r$-graphs in $D\big([r-1],H,n\big)$ to the problem of counting the $2$-colorings of the vertices in $[n]\setminus [r-1]$ under certain restrictions.
		The first step is to define for each $G \in D\big([r-1],H,n\big)$ a coloring $C_{G}:[n]\setminus [r-1]\to \{0,1\}$ as follows:
		\begin{align*}
			\begin{cases}
				C_{G}(i) = 1, \, \text{ if } \, [r-1]\cup \{i\} \in E(G), \\
				C_{G}(i) = 0, \, \text{ otherwise.}
			\end{cases}
		\end{align*}
		Observe that the number of $r$-graphs in $D\big([r-1],H,n\big)$ is equal to the number of colorings $c:[n]\setminus [r-1] \to \{0,1\}$ for which 
		\begin{align*}\left(H \cup \big\{i \cup [r-1]: i \in c^{-1}(1)\big\}\right) \setminus \big\{i \cup [r-1]: i \in c^{-1}(0)\big\}
		\end{align*}forms an $(L,k)$-free $r$-graph. As $|D\big([r-1],H,n\big)| \le 2^k$ if $n \le k$, from now on we assume that $n > k$.	
		We partition the set $D\big([r-1],H,n\big)$ according to a partial coloring on the set $S:=[k-2]\setminus[r-1]$. 
		For each coloring $c:S \to \{0,1\}$, let $D(c)$ be the set of $r$-graphs $G \in D\big([r-1],H,n\big)$ for which $\restr{C_{G}}{S}=c$. Now,
		\begin{align*}
		    D\big([r-1],H,n\big)=\bigcup_{c:S \to \{0,1\}}D(c).  
		\end{align*}
		We may think that all the edges and non-edges corresponding to $r$-sets of the form $[r-1]\cup \{i\}$, for $i \in S$, are fixed inside $D(c)$. 
		Our objective now is to bound the size of $D(c)$ via a constraint satisfaction problem on $[n]\setminus [k-2]$.
		To do so, we need to introduce some notation.
		For each pair of distinct vertices $\{i,j\} \se [n]\setminus [k-2]$,
		let $R_{i,j}$ be the collection of all $r$-sets in $[k-2]\cup \{i,j\}$ which are different from $[r-1]\cup \{i\}$ and $[r-1]\cup \{j\}$. 
		Observe that all $r$-graphs in $D(c)$ coincide on the $r$-sets in $R_{i,j}$.
		More precisely, for $G_1, G_2 \in D(c)$ and $e \in R_{i,j}$, we have $e \in E(G_1)$ if and only if $e \in E(G_2)$.
		Let $\|R_{i,j}\|_c$ be the number of edges in $R_{i,j}$ which are in common for every $r$-graph in $D(c)$.
		Note that we cannot have $r$-graphs $G_1, \, G_2$ and $G_3$ in $D(c)$ which induce $\|R_{i,j}\|_c$, $\|R_{i,j}\|_c+1$ and $\|R_{i,j}\|_c+2$ edges in $[k-2]\cup \{i,j\}$, respectively, as this would contradict the assumption that $L \cap \{i-1,i,i+1\} \neq \emptyset$ for all $i \in \left [ \binom{k}{r}-1 \right]$.
		
		For each $\{i,j\} \se [n]\setminus [k-2]$, let $t^c_{i,j}\in \{0,1,2\}$ be such that $\|R_{i,j}\|_c+t^c_{i,j}\notin L$.
		Then, for every $G \in D(c)$ we must have $C_{G}(i)+C_{G}(j) \neq t_{i,j}^c$, otherwise $G$ would have a forbidden structure on $[k-2]\cup \{i,j\}$.
		If $t_{i,j}^c=0$ or $2$, this is equivalent to the constraint $\big(C_{G}(i),C_{G}(j)\big)\neq (t_{i,j}^c/2, t_{i,j}^c/2)$; and if 
		$t_{i,j}^c = 1$, it is equivalent to the constraint $\big(C_{G}(i),C_{G}(j)\big)\notin \{(0,1), \, (1,0)\}$.
		We can then define the following constraint function $f_c: \binom{[n]\setminus [k-2]}{2} \to \C$:
		\begin{align*}
			f(\{i,j\}):=\begin{cases}
				\{(0,0)\}, &\text{if } t_{i,j}^c=0,\\
				\{(1,0),(0,1) \}, &\text{if } t_{i,j}^c=1,\\
				\{(1,1) \}, &\text{if } t_{i,j}^c=2.
			\end{cases}
		\end{align*}
		Let $G_c = \big( [n]\setminus [k-2], f_c\big)$ be a CSP.
		It follows that $|D(c)| \le |A(G_c)|$, where $A(G_c)$ is the set of satisfying assignments of the CSP $G_c$.
		By Lemma~\ref{CSPcount}, we have $|A(G_c)| \le n-k+3$ and hence 
		\begin{align*}
			|D\big([r-1],H,n\big)|&\le \sum \limits_{c:\, S \to \{0,1\}} |D(c)|  \le n \cdot 2^{|S|}.
		\end{align*}
		As $|S| \le k$, this proves our lemma.
		
	\end{proof}
	
	Combining Lemmas~\ref{recursionfora} and~\ref{Linkgraph}, we obtain the following corollary.
	\begin{corollary}
		\label{recursionasolved}
		Let $n, k$ and $r$ be natural numbers such that $n,k \ge r \ge 2$ and let
		$L\subseteq\left \{0,1,\ldots,\binom{k}{r} \right \}$ be a $3$-good list.
		Then, for all $i \in [r-1]$ we have
		\begin{align*}
			d(r-i,n)\leq 2^{kn^{i-1} +n^{i-1}\log n}.
		\end{align*}
	\end{corollary}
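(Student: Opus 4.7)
The plan is a straightforward induction on $i$, using Lemma~\ref{Linkgraph} as the base case and Lemma~\ref{recursionfora} as the inductive step. The ``hard work'' is already bundled into those two lemmas, so this corollary is essentially a calculation; the only care needed is to track how the exponents compound when we multiply the per-vertex bounds.

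For the base case $i=1$, we want $d(r-1,n) \le 2^{kn^{0}+n^{0}\log n} = 2^{k}\cdot n$, which is exactly what Lemma~\ref{Linkgraph} gives. So the base case is immediate.

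For the inductive step, suppose the bound $d(r-i,v)\le 2^{kv^{i-1}+v^{i-1}\log v}$ holds for all $v\ge r$. Applying Lemma~\ref{recursionfora} with $a=r-i-1$ (which is legal provided $r-i-1<r-1$, i.e.\ $i\ge 1$), we obtain
\begin{align*}
 d(r-i-1,n) \le \prod_{v=r}^n d(r-i,v) \le \prod_{v=r}^n 2^{kv^{i-1}+v^{i-1}\log v}.
\end{align*}
Taking logarithms and using $\log v\le \log n$ throughout the range $v\le n$, the exponent is at most
\begin{align*}
 \sum_{v=r}^n\bigl(kv^{i-1}+v^{i-1}\log v\bigr)\le (k+\log n)\sum_{v=1}^n v^{i-1}\le (k+\log n)\cdot n^{i},
\end{align*}
since $\sum_{v=1}^n v^{i-1}\le n\cdot n^{i-1}=n^i$. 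This yields $d(r-i-1,n)\le 2^{kn^{i}+n^{i}\log n}$, which is precisely the claimed bound with $i$ replaced by $i+1$. This closes the induction and completes the proof.

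The one subtlety to double-check is that Lemma~\ref{recursionfora} applies at each step, i.e.\ that $a=r-i-1$ satisfies $a<r-1$, which holds as soon as $i\ge 1$; since the induction only runs up to $i=r-1$, we never leave the valid range. No additional combinatorial insight is required beyond the two lemmas already proved.
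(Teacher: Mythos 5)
Your proof is correct and follows exactly the paper's argument: induction on $i$ with Lemma~\ref{Linkgraph} as the base case and Lemma~\ref{recursionfora} driving the inductive step, with the same bound $\sum_{v=r}^n v^{i-1}(k+\log v)\le (k+\log n)n^i$ on the exponent. No differences worth noting.
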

	\begin{proof}
		We proceed by induction on $i$. By Lemma~\ref{Linkgraph}, the statement holds for $i=1$. Now, assume that the lemma holds for some  $i \in [r-2]$.
		By Lemma~\ref{recursionfora}, we obtain
		\begin{align*}
			%\label{inequalitya1}
			 d(r-(i+1),n) & \, \leq \,  \prod_{v=r}^n d(r-i,v)
			 \le \, \prod_{v=r}^n 2^{kv^{i-1} +v^{i-1}\log v}
			 \le \, 2^{kn^{i}+n^{i}\log n},
		\end{align*}
		as required.
	\end{proof}
	
	We are now ready to complete the proof of Theorem~\ref{maincounting}.
	Let $n \ge k>r\ge 2$ be integers and $L\subseteq \big\{0,1,\ldots, \binom{k}{r}\big\}$ be a $3$-good list. 
	By~\eqref{recursionf}, we have $f(n) \le d(n) \cdot f(n-1)$ and hence
	\begin{align*}
		f(n) \le f(k)\cdot \prod_{v=k+1}^{n} d(v).
	\end{align*}
	As $d(v) = d(1,v)$, by Corollary~\ref{recursionasolved} we have $d(v) \le 2^{kn^{r-2}+v^{r-2}\log v}$.
	From this together with the trivial bound $f(k) \le 2^{k^r}$, we obtain
	\begin{align*}
		f(n)\, & \le \,  2^{k^r} \prod_{v=k+1}^n 2^{kv^{r-2} +v^{r-2}\log v} \le \, 2^{k^r+kn^{r-1}+n^{r-1}\log n},
	\end{align*}
	as required.

	\section{Proof of Theorems~\ref{maincountingcorl} and~\ref{theo:tablevalues}}
	\label{sec:tableexplanation}
In this section we prove Theorem~\ref{theo:tablevalues}, that is, we determine $f(n,3,4,L)$ asymptotically for all possible $L$. 
In particular, we prove Theorem~\ref{maincountingcorl}.
For simplicity, we denote $f(n,L)=f(n,3,4,L)$ and assume that $n \ge 4$ throughout this section.

For a list $L \se \{0,1,2,3,4\}$, define $L^c = \{4-i: i \in L\}$.
Observe that $f(n,L)=f(n,L^c)$, as a $3$-graph $G$ does not induce $i$ edges on $4$ vertices if and only if its complement $G^c$ does not induce $4-i$ edges on $4$ vertices. 
In light of this, to prove of Theorem~\ref{theo:tablevalues} it is sufficient to bound $f(n,L)$ for only one set $L$ in each row of Table 1 below.
When $\{0,4\} \se L$, our proof trivially follows from Ramsey's theorem, see Claim~\ref{L04}.
When $\{2\} \se L\se \{0,1,2\}$, $L$ is $3$-good and hence the upper bound on $f(n,L)$ follows from Theorem~\ref{maincounting}. The lower bound is obtained via the same greedy procedure used in the proof of Lemma~\ref{listsharp1}, see Claim~\ref{L2} for more details.
These and the trivial cases already occupy a good proportion of the table.
The remaining cases are more delicate and we need to deal with each of them separately.
We start with the proof of Theorem~\ref{maincountingcorl}.

\begin{table}[h]
	\begin{center}
		\begin{tabular}{ |c|c|c| } 
			\hline
			$L$ & $f(n,3,4,L)$  &  proof   \\
			\hline
			$\emptyset$ & $2^{\binom{n}{3}}$ & trivial \\
			\hline
			\{0\},\ \{4\} & $2^{\Theta(n^3)}$ & Claim~\ref{L34}  \\
			\hline
			\{1\},\ \{3\} & $2^{\Theta(n^3)}$ &  follows from $\{3,4\}$  \\
			\hline
			\{0,1\},\ \{3,4\} & $2^{\Theta(n^3)}$ & Claim~\ref{L34}  \\
			\hline
			\{2\} & $2^{\Theta(n^2\log n)}$ & Claim~\ref{L2}  \\
			\hline
			\{0,2\},\ \{2,4\} & $2^{\Theta(n^2\log n)}$ & Claim~\ref{L2}  \\
			\hline
			\{0,3\},\ \{1,4\} & $2^{\Theta(n^2\log n)}$ & Theorem~\ref{maincountingcorl}  \\
			\hline
			\{1,2\},\ \{2,3\} & $2^{\Theta(n^2\log n)}$ & Claim~\ref{L2}  \\
			\hline
			\{0,1,2\},\ \{2,3,4\} & $2^{\Theta(n^2\log n)}$ & Claim~\ref{L2}  \\
			\hline
			\{1,3\} & $2^{\binom{n-1}{2}}$ & Lemma~\ref{L13}   \\
			\hline
			\{0,1,3\},\ \{1,3,4\} & $2^{\Theta(n\log n)}$ & Lemma~\ref{L013}  \\
			\hline
			\{0,2,3\},\ \{1,2,4\} & $n+1$ for $n\geq 5$ & Claim~\ref{L023} \\
			\hline
			\{1,2,3\} & 2 & Claim~\ref{L123} \\
			\hline
			\{1,2,3,4\},\ \{0,1,2,3\} & 1  & trivial  \\
			\hline
			\{0,4\} & $0$ for $n\geq 13$ & Claim~\ref{L04}  \\
			\hline
			\{0,1,4\},\ \{0,3,4\} & $0$ for $n\geq 13$ & Claim~\ref{L04}  \\
			\hline
			\{0,2,4\} & $0$ for $n\geq 13$ & Claim~\ref{L04}  \\
			\hline
			\{0,2,3,4\},\ \{0,1,2,4\} & $0$ for $n\geq 13$ & Claim~\ref{L04}  \\
			\hline
			\{0,1,3,4\} & $0$ for $n\geq 13$ & Claim~\ref{L04}  \\
			\hline
			\{0,1,2,3,4\} & 0 & trivial  \\
			\hline
		\end{tabular}
	\end{center}
	\caption{
		\label{tab:tableforL}
		This table shows the values for $f(n,3,4,L)$ for all possible $L$.}
\end{table}

	\begin{proof}[Proof of Theorem~\ref{maincountingcorl}]
		The upper bound follows immediately from Theorem~\ref{maincounting}.
		For the lower bound, we consider the family of $3$-graphs $Q(n)$ presented in the introduction.
		Let $[n]=V_1\cup V_2\cup V_3$ be a partition of $[n]$ with $\big | |V_i|-|V_j| \big |\le 1$ for all $i,j \in [3]$, which is the same partition used in the definition of $C_n$.
		Recall that a $3$-graph is in $Q(n)$ if it is obtained from $C_n$ by removing a linear $3$-graph with the additional property that every edge contains one element from each of the classes $V_1$, $V_2$ and $V_3$.
		As every $4$-set in $C_n$ has either $0$ or $3$ edges,
		for a $3$-graph $H \in Q(n)$ every $4$-set has either $0$ or $2$ edges.
		As $Q(n)$ is a family of $(\{1,4\},4)$-free $3$-graphs on $n$ vertices, to lower bound $f(n,\{1,4\})$ it suffices to lower bound $|Q(n)|$.
		
%		we first consider an extremal $(\{1,4\},4)$-free $3$-graph $C_n$ with vertex set $[n]$, see Figure~\ref{fig:Cn}.
%		This $3$-graph is constructed\footnote{This construction is due to Paul Turán.} as follows.
%		Let $(V_i)_{i \in [3]}$ be a partition of the vertex set $[n]$ such that $\big | |V_i|-|V_j| \big |\le 1$ for all $i,j \in [3]$.
%		An edge is placed in $C_n$ if it intersects each of the classes $V_1$, $V_2$ and $V_3$, or if for some $i \in [3]$ it contains two elements of $V_i$ and one of $V_{i+1}$, where the indices are understood modulo $3$.
%		We now construct a family of subgraphs of $C_n$ which are $(\{1,4\},4)$-free. 
%		To do so,
%		let $L(n)$ be the family of $3$-graphs $H$ on $[n]$ with the following properties: 
%		(a) every edge in $H$ contains one vertex from each of the classes $V_1$, $V_2$ and $V_3$; (b) two edges in $H$ intersect in at most one vertex.
%		Define 
%		\begin{align*}
%			Q(n) = \{C_n-H: H \in L(n)\},
%		\end{align*}
%		where $C_n-H$ denotes the subgraph with vertex set $V(C_n)$ and edge set $E(C_n)\setminus E(H)$.
%		A careful analysis shows that every $3$-graph in $Q(n)$ is $(\{1,4\},4)$-free, we skip the details.
%As $|Q(n)| = |L(n)|$, to lower bound $|Q(n)|$ we lower bound $|L(n)|$ via the following greedy procedure, which is similar to the one in the proof of Lemma~\ref{listsharp3}.
		Let $L(n)$ be the family of linear $3$-graphs on $[n]$ with the additional property that every edge intersects each of the classes $V_1,V_2$ and $V_3$. 
		Clearly, we have $|Q(n)|=|L(n)|$.
		Now, we lower bound $|L(n)|$ via the following greedy procedure, which is similar to the one in the proof of Lemma~\ref{listsharp3}.
		Let $e_1$ be a $3$-set in $[n]$ such that $|e_1 \cap V_j|=1$ for all $j \in [3]$.
		For $i>1$, let $e_i$ be a $3$-set in $[n]$ such that $|e_1 \cap V_j|=1$ for all $j \in [3]$ and such that $|e_i \cap e_k| \le 1$ for all $k \in [i-1]$.
		When an edge $e_i$ with this property cannot be found, the procedure stops and output $\{e_1,\ldots,e_{i-1}\}$.
		Observe that any $3$-graph obtained from this procedure belongs to $L(n)$.
		As there are at most $ni$ sets of size $3$ which intersect some element of $\{e_1,\ldots, e_{i-1}\}$ in $2$ vertices or more, we have at least $|V_1||V_2||V_3|-ni$ choices for $e_i$.
		This implies that the procedure lasts for at least $\frac{n^{2}}{27}-\frac{n^2}{\log n}$ steps.
		Moreover, as we have $|V_1||V_2||V_3|-ni \ge \frac{n^3}{2\log n}$ for all $i \le \frac{n^{2}}{27}-\frac{n^2}{\log n}$, the number of $3$-graphs in $L(n)$ and hence in $Q(n)$ is at least
		\begin{align}\label{sizeofQ}
		|Q(n)|\geq
			\frac{\left(\frac{n^3}{2\log n}\right)^{\frac{n^{2}}{27}-\frac{n^2}{\log n}}}{\left(\left \lfloor\frac{n^{2}}{27}-\frac{n^2}{\log n}\right \rfloor\right)!} \geq \left( \frac{n}{\log n} \right)^{\frac{n^{2}}{27}-\frac{n^2}{\log n}} =2^{ \big(\frac{1}{27}+o(1)\big)n^2\log n}.
		\end{align}
		The factorial term above takes the double counting into consideration.
		Combining this bound with the upper bound from Theorem~\ref{maincounting}, we obtain $f(n,\{1,4\})=2^{\Theta(n^{2}\log n)}$.
			\end{proof}
	
	\begin{claim}
		\label{L04}
		If $n\geq 13$ and $\{0,4\}\subseteq L$, then $f(n,L)=0$.
	\end{claim}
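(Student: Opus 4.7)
The plan is to interpret the condition $\{0,4\} \subseteq L$ as a Ramsey-type restriction and then invoke the known value of the $3$-uniform hypergraph Ramsey number. A $3$-graph $G$ is $(L,4)$-free with $\{0,4\} \subseteq L$ precisely when no $4$-set of vertices induces $0$ edges and no $4$-set induces $4$ edges; equivalently, $G$ contains neither an independent set of size $4$ nor a copy of $K_4^3$. Viewing $G$ as a red/blue colouring of $\binom{[n]}{3}$ (edges red, non-edges blue), we are demanding the absence of any monochromatic $K_4^{(3)}$.

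The main ingredient is the classical fact that the $3$-uniform Ramsey number satisfies $R^{(3)}(4,4) = 13$; that is, every red/blue colouring of the triples of a $13$-vertex set contains a monochromatic $4$-clique. Hence for $n \ge 13$, every $3$-graph $G$ on $[n]$ necessarily contains either a $K_4^3$ (four vertices spanning $4$ edges) or an independent $4$-set (four vertices spanning $0$ edges), so no $(L,4)$-free $3$-graph with $\{0,4\}\subseteq L$ can exist. Therefore $f(n,L)=0$.

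The only conceptual step is citing the correct Ramsey bound; no genuine obstacle arises, since the upper bound $R^{(3)}(4,4) \le 13$ is standard. In the write-up I would simply state this Ramsey fact and conclude in one line, noting that the threshold $n \ge 13$ in the claim matches the Ramsey number exactly.
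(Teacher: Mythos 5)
Your proposal is correct and is exactly the paper's argument: interpret edges/non-edges as a red/blue colouring of triples and invoke $R_3(4,4)=13$ (determined by McKay and Radziszowski) to rule out any $(L,4)$-free $3$-graph on at least $13$ vertices when $\{0,4\}\subseteq L$. No further comparison is needed.
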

	\begin{proof}
		Let $R_3(4,4)$ be the smallest integer $n$ such that any red and blue edge-coloring of the complete $3$-graph $K_n^3$ contains a red copy of $K_4^3$ or a blue copy of $K_4^3$.
		The hypergraph Ramsey number $R_3(4,4)$ was determined by McKay and Radziszowski in~\cite{MR1095846}, where they showed that $R_3(4,4)=13$.
		We conclude that for all sets $L$ such that $\{0,4\} \se L$ there is no $(L,4)$-free $3$-graph on at least $13$ vertices. 
	\end{proof}

	\begin{claim}
		\label{L2}
		Let $L$ be a list such that $\{2\} \se L \se \{0,1,2\}$.
		Then, $f(n,L)=2^{\Theta(n^2\log n)}$.
	\end{claim}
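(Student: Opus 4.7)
The plan is to obtain both bounds as immediate consequences of results already established in this paper, with essentially no new work required.

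For the upper bound, I would first verify that any list $L$ satisfying $\{2\} \subseteq L \subseteq \{0,1,2\}$ is 3-good. With $k=4$ and $r=3$, we have $\binom{k}{r}=4$, so one must check that $\{i,i+1,i+2\} \cap L \neq \emptyset$ for each $i \in \{0,1,2\}$; since $2$ lies in every such interval and $2 \in L$ by hypothesis, this is immediate. Theorem~\ref{maincounting} then yields $f(n,L) \leq 2^{8n^2 + n^2 \log n} = 2^{O(n^2 \log n)}$.

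For the lower bound, I would exploit monotonicity of $f(n,\cdot)$ in $L$: adding elements to $L$ can only decrease the count, so it suffices to prove the lower bound in the extreme case $L = \{0,1,2\}$. Using the complementation identity $f(n,L)=f(n,L^c)$ recorded at the beginning of this section, this reduces to lower bounding $f(n,\{2,3,4\})$. A $3$-graph is $(\{2,3,4\},4)$-free precisely when every $4$-set of vertices spans at most one edge, which is equivalent to the condition that any two distinct edges share at most one vertex --- that is, the $3$-graph is \emph{linear}. Lemma~\ref{listsharp1} applied with $r=3$ gives that the number of such $3$-graphs on $n$ vertices is $2^{\Theta(n^2 \log n)}$, which supplies the required lower bound.

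No substantive obstacle is anticipated: the main (small) point to be careful about is correctly identifying ``no $4$-set induces $2$, $3$, or $4$ edges'' with linearity, which follows because two edges sharing two or more vertices would automatically induce at least two edges on their union (a set of at most four vertices). The entire argument is then just monotonicity plus complementation plus a direct appeal to Theorem~\ref{maincounting} and Lemma~\ref{listsharp1}.
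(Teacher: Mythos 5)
Your proposal is correct and follows essentially the same route as the paper: the upper bound is the same appeal to Theorem~\ref{maincounting} after checking $3$-goodness, and your lower bound via monotonicity, the identity $f(n,L)=f(n,L^c)$, and Lemma~\ref{listsharp1} is just the abstract restatement of the paper's argument that $K_n^3-H$ is $(L,4)$-free for every linear $3$-graph $H\in M(n,3)$, with the count of $M(n,3)$ again taken from the greedy procedure in Lemma~\ref{listsharp1}.
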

	\begin{proof}
		Let $L$ be a list such that $\{2\}\se L \se \{0,1,2\}$.
		Then, for every $i \in [3]$ we have $L \cap \{i-1,i,i+1\} \neq \emptyset$ and hence $L$ is $3$-good.
		By Theorem~\ref{maincounting}, it follows that $f(n,L) \le 2^{\Theta(n^2\log n)}$.
		
		To show a lower bound of the same order, we consider the set $M(n,3)$ of $3$-graphs on $[n]$ such that every pair of vertices is contained in at most one edge.
		For a graph $H \in M(n,3)$, we have that the graph $K_n^3-H$ is $(L,4)$-free and hence $f(n,L)\ge |M(n,3)|$.
		A lower bound on $|M(n,3)|$ was already obtained in the proof of Lemma~\ref{listsharp1}, where we showed that $|M(n,3)|\ge 2^{\Theta(n^2\log n)}$.
	\end{proof}

	To prove the next two lemmas, it is convenient to define the link graph of a vertex.
	For a $3$-graph $H$ and a vertex $v$ of $H$, we define $S_{H}(v)$ to be the graph with vertex set $V(H)\setminus\{v\}$ and edge set 
	\begin{align*}
		E\big(S_{H}(v)\big):=\{e\setminus\{v\}: v \in e, e \in E(H)\}.
	\end{align*}
	We refer to $S_{H}(v)$ as the \emph{link graph} of $v$ in $H$.
	\begin{lemma}
		\label{L013}
		If $L=\{0,1,3\}$, then $f(n,L)=2^{\Theta(n \log n)}$.
	\end{lemma}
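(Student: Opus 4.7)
The plan is to establish a bijection between $(\{0,1,3\},4)$-free $3$-graphs on $[n]$ and a well-structured class of graphs on $n-1$ vertices via the link graph of a fixed vertex, and then handle that class by a structural decomposition plus a direct count. Since $L=\{0,1,3\}$, a $3$-graph $H$ is $(L,4)$-free iff every $4$-set of vertices induces exactly $2$ or $4$ edges. Fix $v \in [n]$ and set $G := S_H(v)$. For any triple $\{a,b,c\} \subseteq [n]\setminus\{v\}$, the $4$-set $\{v,a,b,c\}$ contains $e(G[\{a,b,c\}])$ edges of $H$ of the form $\{v,x,y\}$, plus the edge $\{a,b,c\}$ itself if it lies in $H$. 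Forcing the total into $\{2,4\}$ rules out $e(G[\{a,b,c\}]) = 0$, giving $\alpha(G) \le 2$, and determines $\{a,b,c\} \in E(H)$ by the parity of $e(G[\{a,b,c\}])$: it is an edge iff the count is odd. Thus $H$ is uniquely recovered from $G$.

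Next, applying the $4$-set constraint to quadruples $\{a,b,c,d\} \subseteq [n]\setminus\{v\}$, the number of $H$-edges in such a quadruple is the number of its triples with odd $G$-parity. Since each edge of $G$ in the quadruple sits in exactly two of the four triples, this count is always even, hence lies in $\{0,2,4\}$, and equals $0$ precisely when $G[\{a,b,c,d\}]$ is one of the three $4$-vertex graphs with every triple of even edge count, namely $\overline{K_4}$, $K_{1,3}$, or $C_4$. The first two are excluded by $\alpha(G) \le 2$, so the surviving constraint is that $G$ contains no induced $C_4$. Consequently, $f(n,\{0,1,3\})$ equals the number of labeled graphs on $n-1$ vertices with $\alpha \le 2$ and no induced $C_4$, equivalently, whose complement is $(K_3, 2K_2)$-free.

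For the upper bound, I would use the dichotomy that every $(K_3, 2K_2)$-free graph $\bar G$ is either bipartite or contains an induced $C_5$: triangle-freeness excludes $C_3$, while a shortest induced odd cycle of length $2k+1 \ge 7$ contains four vertices whose induced subgraph is $2K_2$. Bipartite $(K_3, 2K_2)$-free graphs are exactly the \emph{chain graphs}, i.e., bipartite graphs whose one-sided neighborhoods are totally ordered by inclusion (otherwise two crossing neighborhoods immediately yield an induced $2K_2$), and the number of labeled chain graphs on $m$ vertices is $2^{\Theta(m \log m)}$ — upper-bounded, for instance, by $2^m \cdot m! \cdot (m+1)^m = 2^{O(m \log m)}$ via the choice of bipartition, a linear order on one part, and neighborhood sizes on the other. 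For the non-bipartite case, fixing an induced $C_5$ on $\{v_1,\ldots,v_5\}$, a short case check shows that every remaining vertex must be either isolated or joined to one of the five ``short-diagonal'' pairs $\{v_i, v_{i+2}\}$, and only pairs sharing a $C_5$-vertex can coexist; this contributes only $2^{O(m)}$ graphs, absorbed by the chain-graph count.

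For the lower bound, I would fix a balanced bipartition $A \cup B$ of $[n]\setminus\{v\}$ with $|A|=|B|=\lfloor (n-1)/2 \rfloor$ and, for each permutation $\tau$ of $B$, take the chain graph in which the $i$-th $A$-vertex has neighborhood $\{\tau(1),\ldots,\tau(i)\}$. Distinct permutations yield distinct chain graphs (the neighborhood sizes are all distinct, so $\tau$ is recoverable), producing at least $\lfloor (n-1)/2 \rfloor! = 2^{\Omega(n \log n)}$ choices of $G$, each giving a distinct $(\{0,1,3\},4)$-free $3$-graph via the parity reconstruction. The main obstacle will be the structural step of the upper bound: enumerating admissible adjacencies for vertices outside an induced $C_5$ to show that the non-bipartite case contributes only $2^{O(m)}$ graphs, together with verifying the $2^{O(m \log m)}$ bound for labeled chain graphs precisely enough that the polynomial factors are absorbed.
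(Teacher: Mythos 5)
Your proof is correct and follows the same overall strategy as the paper: a bijection between $(\{0,1,3\},4)$-free $3$-graphs and the link graphs of a fixed vertex (with the edge $abc$ recovered from the parity of the link on $\{a,b,c\}$), a characterization of the admissible link graphs as exactly those whose complement is triangle-free and induced-$2K_2$-free, and a count of that class dominated by chain graphs, giving $2^{\Theta(n\log n)}$ from both sides. Your parity formulation of the reconstruction and of the $4$-set analysis (sum of the four triple-counts is even, and the three exceptional $4$-vertex graphs $\overline{K_4}$, $K_{1,3}$, $C_4$ are exactly those with all triples even) is a cleaner packaging of the paper's case check in Figure~2. The one genuinely different step is the upper bound on $\{K_3,2K_2\}$-free graphs: the paper shows such graphs are $3$-colorable and bounds them by $3^m\cdot b(m)^3$ where $b(m)$ counts chain graphs, whereas you use the dichotomy ``bipartite or contains an induced $C_5$'' and argue the non-bipartite case is rigid (each further vertex is isolated or attached to one of the five short diagonals, with all remaining adjacencies forced), contributing only $2^{O(m)}$. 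Both routes land on the same $m^{\Theta(m)}$ chain-graph count; yours yields slightly more structural information about the non-bipartite graphs in the class, at the cost of a somewhat longer case analysis around the $C_5$ (your parenthetical that only types sharing a $C_5$-vertex can coexist is not quite right --- vertices with disjoint diagonal types coexist but are forced to be adjacent --- though this does not affect the $2^{O(m)}$ bound, since the graph is still determined by the type assignment).
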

	\begin{proof}
		Let $\F(n,L)$ be the family of $(L,4)$-free $3$-graphs on $[n]$ and $\S:=\{S_G(n): \, G \in \F(n,L) \big \}$ be a family of link graphs. 
		We claim that there is a bijection between $\F(n,L)$ and $\S$.
		To show this, take an arbitrary graph $A \in \S$ and let $G$ be such that $S_G(n) = A$.
		For a $3$-set  $\{a_1, a_2, a_3\}$ in $[n-1]$, the following holds:
		\begin{enumerate}
			\item[$(1)$] $A$ has at least one edge in $\{a_1, a_2, a_3\}$. Indeed, otherwise the $3$-graph $G$ induces at most one edge in $\{n, a_1, a_2, a_3\}$, which is a contradiction. 
			
			\item[$(2)$] If $A$ has $1$ or $3$ edges in $\{a_1, a_2, a_3\}$, then $a_1a_2 a_3 \in E(G)$. Otherwise, we have a forbidden structure in $\{n,a_1, a_2, a_3\}$.
			\item[$(3)$] If $A$ has $2$ edges in $\{a_1, a_2, a_3\}$, then $a_1a_2 a_3 \notin E(G)$. Otherwise, we have a forbidden structure in $\{n,a_1, a_2, a_3\}$.
		\end{enumerate}
		From items (1)--(3) it follows that for each $A \in \S$ there is an unique $G \in \F(n,L)$ such that $S_G(n)=A$. From now on, we denote this $3$-graph by $G_A$.

		As there is a bijection between $\F(n,L)$ and $\S$, to bound $|\F(n,L)|$ it suffices to  determine all graphs which belong to $\S$.
		Fix some $A \in \S$ and let $A^c$ be its complement, that is, the graph with vertex set $[n-1]$ and edge set $\binom{[n-1]}{2}\setminus E(A)$. 
		By item (1), we already know that $A^c$ must be triangle-free.
		To see which other conditions $A$ must satisfy, we analyze the graph induced by $A$ in each $4$-set in $[n-1]$.
		We first observe that graphs on $4$ vertices can be divided into $3$ categories:
		($i$) the complement contains a triangle; ($ii$) the graph induces a $C_4$;
		($iii$) the items ($i$) and ($ii$) do not hold. 
		
		\begin{center}
			\begin{tikzpicture}[scale=1]
				\clip(-3.5,-1.6064884705671694) rectangle (11,0.42553079548812683);
				\draw [line width=0.8pt] (-3,0)-- (-3,-1);
				\draw [line width=0.8pt] (-2,0)-- (-2,-1);
				
				\draw [line width=0.8pt] (-0.5,0)-- (-0.5,-1);
				\draw [line width=0.8pt] (-0.5,-1)-- (0.5,0);
				\draw [line width=0.8pt] (0.5,0)-- (-0.5,0);
				
				\draw [line width=0.8pt] (2,-1)-- (2,0);
				\draw [line width=0.8pt] (2,0)-- (3,0);
				\draw [line width=0.8pt] (3,0)-- (3,-1);
				
				\draw [line width=0.8pt] (4.5,-1)-- (4.5,0);
				\draw [line width=0.8pt] (4.5,0)-- (5.5,0);
				\draw [line width=0.8pt] (4.5,-1)-- (5.5,0);
				\draw [line width=0.8pt] (4.5,-1)-- (5.5,-1);
				
				\draw [line width=0.8pt] (7,0)-- (7,-1);
				\draw [line width=0.8pt] (7,-1)-- (8,0);
				\draw [line width=0.8pt] (8,0)-- (7,0);
				\draw [line width=0.8pt] (7,0)-- (8,-1);
				%\draw [line width=0.8pt] (8,-1)-- (7,-1);
				\draw [line width=0.8pt] (8,-1)-- (8,0);
				
				\draw [line width=0.8pt] (9.5,0)-- (9.5,-1);
				\draw [line width=0.8pt] (9.5,-1)-- (10.5,0);
				\draw [line width=0.8pt] (10.5,0)-- (9.5,0);
				\draw [line width=0.8pt] (9.5,0)-- (10.5,-1);
				\draw [line width=0.8pt] (9.5,-1)-- (10.5,-1);
				\draw [line width=0.8pt] (10.5,-1)-- (10.5,0);
				\begin{scriptsize}
					\draw [fill=black] (-3,0) circle (1pt);
					\draw [fill=black] (-3,-1) circle (1pt);
					\draw [fill=black] (-2,0) circle (1pt);
					\draw [fill=black] (-2,-1) circle (1pt);
					
					\draw [fill=black] (-0.5,0) circle (1pt);
					\draw [fill=black] (-0.5,-1) circle (1pt);
					\draw [fill=black] (0.5,0) circle (1pt);
					\draw [fill=black] (0.5,-1) circle (1pt);
					
					\draw [fill=black] (2,0) circle (1pt);
					\draw [fill=black] (2,-1) circle (1pt);
					\draw [fill=black] (3,-1) circle (1pt);
					\draw [fill=black] (3,0) circle (1pt);
					
					\draw [fill=black] (4.5,0) circle (1pt);
					\draw [fill=black] (4.5,-1) circle (1pt);
					\draw [fill=black] (5.5,-1) circle (1pt);
					\draw [fill=black] (5.5,0) circle (1pt);
					
					\draw [fill=black] (7,0) circle (1pt);
					\draw [fill=black] (7,-1) circle (1pt);
					\draw [fill=black] (8,0) circle (1pt);
					\draw [fill=black] (8,-1) circle (1pt);
					
					\draw [fill=black] (9.5,0) circle (1pt);
					\draw [fill=black] (9.5,-1) circle (1pt);
					\draw [fill=black] (10.5,0) circle (1pt);
					\draw [fill=black] (10.5,-1) circle (1pt);
					
				\end{scriptsize}
			\end{tikzpicture}

		Figure 2: All non-isomorphic graphs on 4 vertices satisfying ($iii$).
		\end{center}
		Let $\{a,b,c,d\}$ be a set of size $4$ in $[n-1]$.
		As $A^c$ is triangle-free, we already know that $\{a,b,c,d\}$ does not satisfy item ($i$) in $A$.
		Now, we claim that $A$ does not induce a $C_4$ in $\{a,b,c,d\}$, hence item ($ii$) does not hold.
		Indeed, if $A$ induces a $C_4$ in $\{a,b,c,d\}$, then it follows from item (3) that $G_A$ has no edge in $\{a,b,c,d\}$, which is a contradiction.
		We conclude that every set of $4$ vertices in $A$ satisfies item $(iii)$. This is equivalent to saying that $A^c$ is free of triangles and free of induced matchings of size $2$.

		For $m \in \mathbb{N}$, let $\F_{\triangle,M}(m)$ be the family of graphs on $[m]$ which are free of triangles and induced matchings of size $2$.
		We have seen that if $A \in \S$, then $A^c \in \F_{\triangle,M}(n-1)$.
		Now, we claim that the converse also holds.
		Let $H$ be a graph such that $H^c \in \F_{\triangle,M}(n-1)$ and denote by $G^H$ the $3$-graph on $[n]$ which satisfies items (1)--(3), with $A$ replaced by $H$ and $G$ replaced by $G^H$. By the definition of $G^H$, all $4$-sets in $[n]$ containing $n$ do not induce a forbidden structure in $G^H$.
		Now, let $\{a,b,c,d\}$ be an arbitrary $4$-set in $[n-1]$.
		As $H^c \in \F_{\triangle,M}(n-1)$, the graph induced by $H$ in $\{a,b,c,d\}$ satisfies item ($iii$).
		We represent in Figure 2 all non-isomorphic graphs that $H$ can induce on $\{a,b,c,d\}$.
		Using items (2) and (3), a careful analysis on the number of edges in $\{a,b,c,d\}$ shows that $G^H$ does not induce any forbidden structure. Therefore, we have $G^H \in \F(n,L)$, which implies that $H \in \S$.
		In particular, $G^H = G_H$.

		Now it remains to bound the size of $\F_{\triangle,M}(m)$.
		To do so, we first claim that a graph $G \in \F_{\triangle,M}(m)$ has chromatic number at most $3$.
		Indeed, fix $G \in \F_{\triangle,M}(m)$ and let $uv$ be any edge of $G$.
		Let $N(u)$ and $N(v)$ be the neighborhoods of $u$ and $v$, respectively.
		These neighborhoods cannot intersect, otherwise we create a triangle. Moreover, the set $[m]\setminus \big ( \{u,v\} \cup N(u)\cup N(v) \big)$ cannot have an edge, otherwise we create an induced matching of size $2$ with $uv$.
		It follows that we can divide the graph into three disjoint independent sets: $A_1=\{u\}\cup N(v)$, $A_2 = \{v\} \cup N(u)$ and $A_3:= [m]\setminus \big ( \{u,v\} \cup N(u)\cup N(v) \big)$.
		This proves our claim.
		
		Let $b(m)$ be the number of bipartite graphs with $m$ vertices in each class and with no induced matching of size $2$. As every graph in $\F_{\triangle,M}(m)$ has chromatic number at most $3$ and does not induce a matching of size $2$, we have
		\begin{align}\label{boundfamF}
			b(\lfloor m/2 \rfloor ) \le |\F_{\triangle,M}(m)| \le 3^m \cdot (b( m ))^3.
		\end{align}
		The factor of $3^m$ in the upper bound accounts for the number of ways to partition the set $[m]$ into $3$ parts.
		To bound $b(m)$, we use an argument which appears in~\cite{LZ97}.
		In~\cite{LZ97}, the authors observed that a bipartite graph with parts $A$ and $B$ has no induced matching if and only if for every $a_1,a_2 \in A$ we have $N(a_1) \se N(a_2)$ or $N(a_2) \se N(a_1)$. That is, the set $\{N(a):a \in A\}$ forms a chain.
		Observe that the number of chains of length $m$ is equal to the number of 
		ways to distribute the elements of $[m]$ into $m$ disjoint labeled sets $S_1,\ldots,S_m$. On one hand, this number is at least $m! = m^{\Theta(m)}$, which is the number of ways to place exactly one element in each $S_i$.
		On the other hand, we have the trivial upper bound $m^m$,
		hence the number of chains of length $m$ is of order $m^{\Theta(m)}$.
		By~\eqref{boundfamF}, 
		we obtain $ |\F_{\triangle,M}(m)| = m^{\Theta(m)}$ and therefore
		\begin{align*}
			|\F(n,3,4,L)| = |\F_{\triangle,M}(n-1)| = n^{\Theta(n)}.
		\end{align*}
	\end{proof}
	
	\begin{lemma}
		\label{L13}
		If $L=\{1,3\}$, then $f(n,L)=2^{\binom{n-1}{2}}$.
	\end{lemma}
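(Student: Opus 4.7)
The plan is to show that 3-graphs $G$ on $[n]$ satisfying the $(\{1,3\},4)$-free condition are in bijection with arbitrary graphs on $[n-1]$. Note that forbidding $1$ and $3$ edges on any $4$-set means that every $4$-subset of $V(G)$ must induce an \emph{even} number of edges (that is, $0$, $2$, or $4$). This parity interpretation is the key to the whole argument.

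First, I would fix the vertex $n$ and define $H := S_G(n)$, the link graph of $n$ in $G$. I claim that $H$ determines $G$ uniquely. Indeed, for any triple $\{a,b,c\} \subseteq [n-1]$, the $4$-set $\{a,b,c,n\}$ must induce an even number of edges among $\{abc, abn, acn, bcn\}$, which gives the identity
\begin{equation*}
\mathds{1}[abc \in E(G)] \equiv \mathds{1}[ab \in E(H)] + \mathds{1}[ac \in E(H)] + \mathds{1}[bc \in E(H)] \pmod{2}.
\end{equation*}
Hence each non-link edge of $G$ is forced by the parity of edges that $H$ induces on the corresponding triple, and the map $G \mapsto H$ is injective. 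This immediately yields the upper bound $f(n,\{1,3\}) \le 2^{\binom{n-1}{2}}$.

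For the lower bound (and to confirm that the map is surjective), I would take an arbitrary graph $H$ on $[n-1]$ and construct $G$ by the recipe above: place the edge $abn$ in $G$ iff $ab \in E(H)$, and place $abc$ in $G$ iff $H$ induces an odd number of edges on $\{a,b,c\}$. It remains to verify that $G$ is $(\{1,3\},4)$-free, that is, that every $4$-set induces an even number of edges. For $4$-sets containing $n$ this holds by construction. For a $4$-set $\{a,b,c,d\} \subseteq [n-1]$, summing the parities over the four triples $\{a,b,c\}, \{a,b,d\}, \{a,c,d\}, \{b,c,d\}$ counts each edge of $H$ inside $\{a,b,c,d\}$ exactly twice (once for each triangle containing it), so the total is $0 \pmod 2$, as required.

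The main obstacle, if any, is recognising that the even-parity condition defines a linear subspace and that the link graph at a single vertex is a free parameter; once this is observed, the verification reduces to the elementary double-counting fact that every edge of a graph on $4$ vertices lies in exactly two of the four $3$-subsets. Combining the two bounds gives $f(n,\{1,3\}) = 2^{\binom{n-1}{2}}$, as claimed.
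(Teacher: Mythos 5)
Your proof is correct and follows the same overall strategy as the paper: both fix the vertex $n$, observe that the $(\{1,3\},4)$-freeness of the $4$-sets containing $n$ forces every triple $abc \subseteq [n-1]$ to be an edge of $G$ precisely when the link graph $S_G(n)$ has an odd number of edges on $\{a,b,c\}$, and conclude that $G$ is determined by its link graph. The difference lies in the verification that \emph{every} graph $H$ on $[n-1]$ arises as a link graph, i.e.\ that the forced extension $G^H$ has no forbidden $4$-set inside $[n-1]$. The paper does this by an exhaustive check of all eleven non-isomorphic $4$-vertex graphs (Figure 3), whereas you reformulate the condition as ``every $4$-set induces an even number of edges'' and note that, summing the parities over the four triples of a $4$-set $\{a,b,c,d\} \subseteq [n-1]$, each edge of $H$ inside the $4$-set is counted exactly twice, so the total number of $G^H$-edges there is even. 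This mod-$2$ double-counting argument is cleaner and shorter than the case analysis, and it makes transparent why the count is exactly a power of $2$: the $(\{1,3\},4)$-free $3$-graphs form an affine structure over $\mathbb{F}_2$ parametrised freely by the link of one vertex. Both proofs are complete; yours simply buys a figure-free verification at the cost of the explicit pictures the paper provides.
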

	\begin{proof}
		
		Let $\F(n,L)$ be the family of $(L,4)$-free $3$-graphs on $[n]$ and $\S:=\{S_G(n): \, G \in \F(n,L) \big \}$ be a family of link graphs. 
		We claim that there is a bijection between $\F(n,L)$ and $\S$.
		To show this, we proceed as in the proof of Lemma~\ref{L013}.
		Take an arbitrary graph $A \in \S$ and let $G$ be such that $S_G(n) = A$.
		For a $3$-set $\{a_1, a_2, a_3\}$ in $[n-1]$, the following holds:
		\begin{enumerate}
			\item[$(1)$] If $A$ has $0$ or $2$ edges in $\{a_1, a_2, a_3\}$, then
			$a_1a_2 a_3 \notin E(G)$, as otherwise we have a forbidden structure in $\{a_1, a_2, a_3,n\}$.
			\item[$(2)$] If $A$ has $1$ or $3$ edges in $\{a_1, a_2, a_3\}$, then
			$a_1a_2 a_3\in E(G)$, as otherwise we create a forbidden structure in $\{n,a_1, a_2, a_3\}$.
		\end{enumerate}
		From items (1) and (2) it follows that for each $A \in \S$ there is an unique $G \in \F(n,L)$ such that $S_G(n)=A$.

		As there is a bijection between $\F(n,L)$ and $\S$, to bound $|\F(n,L)|$, it suffices to  determine all graphs which belong to $\S$.
		We claim that $\S$ contains all graphs on $[n-1]$.
		Let $H$ be a graph on $[n-1]$ and denote by $G^H$ the $3$-graph on $[n]$ which satisfies items (1) and (2), with $A$ replaced by $H$ and and $G$ replaced by $G^H$. 
		By the definition of $G^H$, 
		every $4$-set in $[n]$ containing $n$ does not induce a forbidden structure in $G^H$.
		Now, let $\{a,b,c,d\}$ be an arbitrary $4$-set in $[n-1]$.
		In Figure 3 below, we show all possible non-isomorphic graphs on $\{a, b, c, d\}$ and their associated $3$-graphs satisfying items (1) and (2).
		We can see that for any graph induced by $H$ on $\{a,b,c,d\}$, the $3$-graph $G^H$ does not induce any forbidden structure on $\{a,b,c,d\}$. 
		As the number of graphs on $[n-1]$ is $2^{\binom{n-1}{2}}$, we have $|\F(n,L)| = 2^{\binom{n-1}{2}}$.
		
		\begin{center}
			\begin{tikzpicture}
				%\clip(-0.2,-0.2) rectangle (17,2);
				[
				he/.style={draw, semithick},        % he = hyper edge
				ce/.style={draw, dashed, semithick}, % ce = condition edge
				]
				% 0 edges
				\draw [fill=black] (0,0) circle (1pt);
				\draw [fill=black] (1,0) circle (1pt);
				\draw [fill=black] (0,1) circle (1pt);
				\draw [fill=black] (1,1) circle (1pt);
				
				\node (1) at (0,0) {};
				\node (2) at (1,0) {};
				\node (3) at (0,1) {};
				\node (4) at (1,1) {};
				
				% emptyhypergraph
				
				% 1 edge
				\draw [fill=black] (2.5,0) circle (1pt);
				\draw [fill=black] (3.5,0) circle (1pt);
				\draw [fill=black] (2.5,1) circle (1pt);
				\draw [fill=black] (3.5,1) circle (1pt);
				
				\node (5) at (2.5,0) {};
				\node (6) at (3.5,0) {};
				\node (7) at (2.5,1) {};
				\node (8) at (3.5,1) {};
				
				\draw [line width=0.8pt] (2.5,0)-- (3.5,0);

				% 2 edges
				\draw [fill=black] (5,0) circle (1pt);
				\draw [fill=black] (6,0) circle (1pt);
				\draw [fill=black] (5,1) circle (1pt);
				\draw [fill=black] (6,1) circle (1pt);
				
				\node (9) at (5,0) {};
				\node (10) at (6,0) {};
				\node (11) at (5,1) {};
				\node (12) at (6,1) {};
				
				\draw [line width=0.8pt] (5,0)-- (5,1);
				\draw [line width=0.8pt] (6,0)-- (6,1);

				\draw [fill=black] (7.5,0) circle (1pt);
				\draw [fill=black] (8.5,0) circle (1pt);
				\draw [fill=black] (7.5,1) circle (1pt);
				\draw [fill=black] (8.5,1) circle (1pt);
				
				\node (13) at (7.5,0) {};
				\node (14) at (8.5,0) {};
				\node (15) at (7.5,1) {};
				\node (16) at (8.5,1) {};
				
				\draw [line width=0.8pt] (7.5,0)-- (7.5,1);
				\draw [line width=0.8pt] (7.5,0)-- (8.5,0);
				
				% 3 edges
				\draw [fill=black] (10,0) circle (1pt);
				\draw [fill=black] (11,0) circle (1pt);
				\draw [fill=black] (10,1) circle (1pt);
				\draw [fill=black] (11,1) circle (1pt);
				
				\node (17) at (10,0) {};
				\node (18) at (11,0) {};
				\node (19) at (10,1) {};
				\node (20) at (11,1) {};
				
				\draw [line width=0.8pt] (10,0)-- (11,0);
				\draw [line width=0.8pt] (10,0)-- (10,1);
				\draw [line width=0.8pt] (10,0)-- (11,1);

				\draw [fill=black] (12.5,0) circle (1pt);
				\draw [fill=black] (13.5,0) circle (1pt);
				\draw [fill=black] (12.5,1) circle (1pt);
				\draw [fill=black] (13.5,1) circle (1pt);
				
				\node (21) at (12.5,0) {};
				\node (22) at (13.5,0) {};
				\node (23) at (12.5,1) {};
				\node (24) at (13.5,1) {};
				
				\draw [line width=0.8pt] (12.5,0)-- (13.5,0);
				\draw [line width=0.8pt] (12.5,0)-- (12.5,1);
				\draw [line width=0.8pt] (12.5,1)-- (13.5,1);

%				
%				\draw [fill=black] (15,0) circle (1pt);
%				\draw [fill=black] (16,0) circle (1pt);
%				\draw [fill=black] (15,1) circle (1pt);
%				\draw [fill=black] (16,1) circle (1pt);
%				
%				\node (25) at (15,0) {};
%				\node (26) at (16,0) {};
%				\node (27) at (15,1) {};
%				\node (28) at (16,1) {};
%				
%				\draw [line width=0.8pt] (15,0)-- (16,0);
%				\draw [line width=0.8pt] (15,0)-- (15,1);
%				\draw [line width=0.8pt] (15,1)-- (16,0);
				
				\begin{pgfonlayer}{bg}
					% 1 edge
					\draw[thick, fill = red, fill opacity = .25] \hedgeiii{5}{8}{6}{3mm};
					\draw[thick, fill = blue, fill opacity = .25] \hedgeiii{5}{7}{6}{3mm};
					
					%2 edges
					\draw[thick, fill = blue, fill opacity = .25] \hedgeiii{11}{10}{9}{3mm};
					\draw[thick, fill = red, fill opacity = .25] \hedgeiii{12}{10}{9}{3mm};
					\draw[thick, fill = green, fill opacity = .25] \hedgeiii{11}{12}{9}{3mm};
					\draw[thick, fill = yellow, fill opacity = .25] \hedgeiii{11}{12}{10}{3mm};
					
					\draw[thick, fill = blue, fill opacity = .25] \hedgeiii{13}{15}{16}{3mm};
					\draw[thick, fill = red, fill opacity = .25] \hedgeiii{13}{16}{14}{3mm};

					%3 edges
					\draw[thick, fill = blue, fill opacity = .25] \hedgeiii{24}{22}{21}{3mm};
					\draw[thick, fill = red, fill opacity = .25] \hedgeiii{23}{24}{22}{3mm};
					
					%\draw[thick, fill = blue, fill opacity = .25] \hedgeiii{27}{26}{25}{3mm};
					%\draw[thick, fill = red, fill opacity = .25] \hedgeiii{28}{26}{25}{3mm};
					%\draw[thick, fill = green, fill opacity = .25] \hedgeiii{27}{28}{25}{3mm};
					%\draw[thick, fill = yellow, fill opacity = .25] \hedgeiii{27}{28}{26}{3mm};
					
				\end{pgfonlayer}
				
			\end{tikzpicture}
		\end{center}
		
		\begin{center}
			\begin{tikzpicture}
				%\clip(-0.2,-0.2) rectangle (17,2);
				[
				he/.style={draw, semithick},        % he = hyper edge
				ce/.style={draw, dashed, semithick}, % ce = condition edge
				]
				% 3 edges
				\draw [fill=black] (0,0) circle (1pt);
				\draw [fill=black] (1,0) circle (1pt);
				\draw [fill=black] (0,1) circle (1pt);
				\draw [fill=black] (1,1) circle (1pt);
				
				\node (1) at (0,0) {};
				\node (2) at (1,0) {};
				\node (3) at (0,1) {};
				\node (4) at (1,1) {};
				
				\draw [line width=0.8pt] (0,0)-- (1,0);
				\draw [line width=0.8pt] (0,0)-- (0,1);
				\draw [line width=0.8pt] (0,1)-- (1,0);
				
				% emptyhypergraph
				
				% 4 edges
				\draw [fill=black] (2.5,0) circle (1pt);
				\draw [fill=black] (3.5,0) circle (1pt);
				\draw [fill=black] (2.5,1) circle (1pt);
				\draw [fill=black] (3.5,1) circle (1pt);
				
				\node (5) at (2.5,0) {};
				\node (6) at (3.5,0) {};
				\node (7) at (2.5,1) {};
				\node (8) at (3.5,1) {};
				
				\draw [line width=0.8pt] (2.5,0)-- (3.5,0);
				\draw [line width=0.8pt] (2.5,0)-- (2.5,1);
				\draw [line width=0.8pt] (2.5,1)-- (3.5,1);
				\draw [line width=0.8pt] (3.5,0)-- (3.5,1);
				
				% emptyhypergraph
				
				\draw [fill=black] (5,0) circle (1pt);
				\draw [fill=black] (6,0) circle (1pt);
				\draw [fill=black] (5,1) circle (1pt);
				\draw [fill=black] (6,1) circle (1pt);
				
				\node (9) at (5,0) {};
				\node (10) at (6,0) {};
				\node (11) at (5,1) {};
				\node (12) at (6,1) {};
				
				\draw [line width=0.8pt] (5,0)-- (6,1);
				\draw [line width=0.8pt] (5,1)-- (6,0);
				\draw [line width=0.8pt] (5,1)-- (6,1);
				\draw [line width=0.8pt] (6,1)-- (6,0);

				%5 edges
				\draw [fill=black] (7.5,0) circle (1pt);
				\draw [fill=black] (8.5,0) circle (1pt);
				\draw [fill=black] (7.5,1) circle (1pt);
				\draw [fill=black] (8.5,1) circle (1pt);
				
				\node (13) at (7.5,0) {};
				\node (14) at (8.5,0) {};
				\node (15) at (7.5,1) {};
				\node (16) at (8.5,1) {};
				
				\draw [line width=0.8pt] (7.5,0)-- (7.5,1);
				\draw [line width=0.8pt] (7.5,0)-- (8.5,1);
				\draw [line width=0.8pt] (8.5,0)-- (8.5,1);
				\draw [line width=0.8pt] (7.5,1)-- (8.5,0);
				\draw [line width=0.8pt] (7.5,0)-- (8.5,0);

				% 6 edges
				\draw [fill=black] (10,0) circle (1pt);
				\draw [fill=black] (11,0) circle (1pt);
				\draw [fill=black] (10,1) circle (1pt);
				\draw [fill=black] (11,1) circle (1pt);
				
				\node (17) at (10,0) {};
				\node (18) at (11,0) {};
				\node (19) at (10,1) {};
				\node (20) at (11,1) {};
				
				\draw [line width=0.8pt] (10,0)-- (11,0);
				\draw [line width=0.8pt] (10,0)-- (10,1);
				\draw [line width=0.8pt] (10,0)-- (11,1);
				\draw [line width=0.8pt] (10,1)-- (11,1);
				\draw [line width=0.8pt] (11,0)-- (11,1);
				\draw [line width=0.8pt] (10,1)-- (11,0);

				\begin{pgfonlayer}{bg}
					% 3 edges
					\draw[thick, fill = blue, fill opacity = .25] \hedgeiii{3}{2}{1}{3mm};
					\draw[thick, fill = red, fill opacity = .25] \hedgeiii{4}{2}{1}{3mm};
					\draw[thick, fill = green, fill opacity = .25] \hedgeiii{3}{4}{1}{3mm};
					\draw[thick, fill = yellow, fill opacity = .25] \hedgeiii{3}{4}{2}{3mm};
					
					%4 edges
					\draw[thick, fill = blue, fill opacity = .25] \hedgeiii{11}{10}{9}{3mm};
					\draw[thick, fill = red, fill opacity = .25] \hedgeiii{11}{12}{10}{3mm};
					
					\draw[thick, fill = blue, fill opacity = .25] \hedgeiii{15}{14}{13}{3mm};
					\draw[thick, fill = red, fill opacity = .25] \hedgeiii{16}{14}{13}{3mm};
					
					%6 edges
					\draw[thick, fill = blue, fill opacity = .25] \hedgeiii{19}{18}{17}{3mm};
					\draw[thick, fill = red, fill opacity = .25] \hedgeiii{20}{18}{17}{3mm};
					\draw[thick, fill = green, fill opacity = .25] \hedgeiii{19}{20}{17}{3mm};
					\draw[thick, fill = yellow, fill opacity = .25] \hedgeiii{19}{20}{18}{3mm};
				\end{pgfonlayer}
				
			\end{tikzpicture}
		
		\medskip
		Figure 3: All possible non-isomorphic link graphs and $3$-graphs induced by $G^H$ on $\{a,b,c,d\}$.
		\end{center}

	\end{proof}
	
	\begin{claim}
		\label{L34}
		If $L=\{4\}$ or $L=\{3,4\}$, then $f(n,L)=2^{\Theta(n^3)}$.
	\end{claim}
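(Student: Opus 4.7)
The upper bound $f(n,L) \le 2^{\binom{n}{3}} = 2^{O(n^3)}$ is immediate in both cases, so the main content is the matching lower bound $f(n,L) \ge 2^{\Omega(n^3)}$. Since $\{4\} \subseteq \{3,4\}$, a 3-graph that is $(\{3,4\},4)$-free is automatically $(\{4\},4)$-free, so it suffices to exhibit $2^{\Omega(n^3)}$ distinct $(\{3,4\},4)$-free 3-graphs on $[n]$.

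The plan is to reuse the tripartite construction already introduced in the proof of Lemma~\ref{listsharp2}. Let $V_1 \cup V_2 \cup V_3 = [n]$ be a partition with $\bigl||V_i|-|V_j|\bigr| \le 1$ for all $i,j$, and let $G^*$ be the 3-graph whose edge set consists of all triples with exactly one vertex in each class. Then $|E(G^*)| = |V_1||V_2||V_3| = \Theta(n^3)$.

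The key step is to verify that every $4$-set $S \subseteq [n]$ induces at most $2$ edges in $G^*$. This follows from a short pigeonhole argument: since there are only three classes, some class, say $V_1$, contains at least two vertices of $S$. Splitting into cases on the distribution of $S$ across the classes, say $(4,0,0)$, $(3,1,0)$, $(2,2,0)$ or $(2,1,1)$, one checks directly that the number of transversal triples inside $S$ is $0$, $0$, $0$, and $2$ respectively. Hence no $4$-set of $G^*$ induces $3$ or $4$ edges, and the same remains true for every spanning subhypergraph of $G^*$.

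Therefore every one of the $2^{|E(G^*)|} = 2^{\Theta(n^3)}$ subhypergraphs of $G^*$ is $(\{3,4\},4)$-free, giving $f(n,\{3,4\}) \ge 2^{\Omega(n^3)}$ and hence $f(n,\{4\}) \ge 2^{\Omega(n^3)}$ as well. There is no serious obstacle here; the only point requiring any care is the routine case check ensuring that no $4$-set reaches $3$ edges, which is the reason this construction works for $L=\{3,4\}$ and not merely for $L=\{4\}$.
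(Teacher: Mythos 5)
Your proof is correct, and it takes a more elementary route than the paper. The paper's proof of Claim~\ref{L34} identifies $(\{3,4\},4)$-free $3$-graphs with $K_4^{3-}$-free ones (tetrahedron minus an edge) and $(\{4\},4)$-free $3$-graphs with $K_4^{3}$-free ones, and then invokes known Tur\'an-type bounds ($\ex(n,K_4^{3-})\ge \frac{2}{7}\binom{n}{3}(1+o(1))$, $\ex(n,K_4^{3})\ge \frac{5}{9}\binom{n}{3}(1+o(1))$, with nearly matching flag-algebra upper bounds) together with the general counting theorem that the number of $H$-free $3$-graphs is $2^{\ex(n,H)+o(n^3)}$. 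That machinery buys sharper constants in the exponent, e.g.\ $f(n,\{4\})=2^{\ex(n,K_4^3)(1+o(1))}$, which is strictly more information than the claim demands. Your argument instead reuses the balanced tripartite construction (this is exactly Lemma~\ref{listsharp2} of the paper specialized to $r=3$, so you could simply cite it): the pigeonhole case check that no $4$-set meets a transversal triple system in $3$ or more triples is right, downward closure under edge deletion is immediate, and deducing the $L=\{4\}$ case from the $L=\{3,4\}$ case via containment of the families is valid. The trade-off is that your construction only yields roughly $2^{\frac{2}{9}\binom{n}{3}}$ graphs, so you get the order $2^{\Theta(n^3)}$ but not the refined asymptotics; for the statement as claimed, that is entirely sufficient.
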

	\begin{proof}
	Denote $K_4^{3-}$ the tetrahedron minus one edge.
	For $L=\{3,4\}$, $f(n,L)$ counts the number of $K_4^{3-}$-free $n$-vertex 3-graphs and $f(n,\{4\})$ counts the number of $K_4^{3}$-free $n$-vertex 3-graphs. 
	For $K_4^{3-}$ it is known~\cite{K43-extremalfrankl,flagmatic} that 
		\begin{align*}
		    \frac{2}{7}\binom{n}{3} (1+o(1))\leq \textup{ex}(n,K_4^{3-})\leq 0.28689\binom{n}{3} (1+o(1)),
		\end{align*}
		where the lower bound comes from a construction by Frankl and F\"uredi~\cite{K43-extremalfrankl} and the upper bound from flag algebras~\cite{flagmatic}. For the tetrahedron it is known that 
		\begin{align*}
		    \frac{5}{9}\binom{n}{3} (1+o(1))\leq \textup{ex}(n,K_4^{3})\leq 0.5615\binom{n}{3} (1+o(1)),
		\end{align*}
		where the upper bound was proven by Baber~\cite{Baber} via the method of flag algebras. 
It follows that
		\begin{align*}
			2^{\frac{2}{7}\binom{n}{3} (1+o(1))}&\leq f(n,\{3,4\})=2^{\textup{ex}(n,K_4^{3-})(1+o(1))} \le 2^{0.28689\binom{n}{3}(1+o(1))} \\ \text{and} \quad \quad 2^{\frac{5}{9}\binom{n}{3} (1+o(1))}&\leq f(n,\{4\})=2^{\textup{ex}(n,K_4^3)(1+o(1))} \le 2^{0.5615\binom{n}{3}(1+o(1))},
		\end{align*}
where we used that the number of $n$-vertex $H$-free $3$-graphs is $2^{\textup{ex}(n,H)+o(n^3)}$, see~\cite{NSR06}. 
	\end{proof}

	\begin{claim}
		\label{L123}
		Let $n\geq 4$ and $L=\{1,2,3\}$. Then, $f(n,L)=2$.
	\end{claim}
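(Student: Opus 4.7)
The plan is to show that the only $(\{1,2,3\},4)$-free $3$-graphs on $[n]$ are the empty $3$-graph and the complete $3$-graph $K_n^3$, giving $f(n,\{1,2,3\})=2$. The defining property is that every $4$-set of vertices spans either $0$ or $4$ edges.

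First, I would record the following propagation lemma: if $\{a,b,c\} \in E(G)$ and $d \in [n]\setminus\{a,b,c\}$, then the $4$-set $\{a,b,c,d\}$ already contains one edge, so by the $(\{1,2,3\},4)$-free condition it must contain all four, giving $\{a,b,d\},\{a,c,d\},\{b,c,d\}\in E(G)$. In particular, \emph{every pair contained in an edge of $G$ is contained in all triples over it}, i.e.\ for every edge $\{a,b,c\}$ and every $d\in [n]$ one has $\{a,b,d\}\in E(G)$, and similarly for $\{a,c\}$ and $\{b,c\}$.

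Next, assuming $G$ has at least one edge $\{a,b,c\}$, I would show that $G=K_n^3$ by case-analyzing how an arbitrary triple $\{x,y,z\}\subseteq [n]$ meets $\{a,b,c\}$. If $|\{x,y,z\}\cap\{a,b,c\}|\geq 2$, then $\{x,y,z\}$ contains one of the pairs $\{a,b\},\{a,c\},\{b,c\}$, and the propagation lemma immediately gives $\{x,y,z\}\in E(G)$. If $|\{x,y,z\}\cap\{a,b,c\}|=1$, say $x=a$ with $y,z\notin\{a,b,c\}$, then $\{a,b,y\}\in E(G)$ by propagation from $ab$; applying propagation now to the edge $\{a,b,y\}$ and the vertex $z$ yields $\{a,y,z\}=\{x,y,z\}\in E(G)$. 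If $\{x,y,z\}\cap\{a,b,c\}=\emptyset$, apply propagation from $\{a,b,c\}$ to $x$ to get $\{a,b,x\}\in E(G)$; then from $\{a,b,x\}$ to $y$ to get $\{a,x,y\}\in E(G)$; then from $\{a,x,y\}$ to $z$ to get $\{x,y,z\}\in E(G)$.

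Combining these cases, $G$ is either edgeless or complete, so $f(n,\{1,2,3\})=2$ for $n\geq 4$. The only possible obstacle is to make sure the three-step propagation in the last case is valid (which needs $n\geq 4$ so that the intermediate $4$-sets exist and are genuine), but this is guaranteed by the hypothesis $n\geq 4$, and for $n=4$ the statement is immediate from the single $4$-set having $0$ or $4$ edges.
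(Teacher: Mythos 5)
Your proof is correct and rests on the same key observation as the paper's: any $4$-set containing at least one edge must span all four edges, so edges propagate to the whole vertex set. The paper packages this via a maximal clique that absorbs any outside vertex, while you propagate directly along chains of $4$-sets to reach an arbitrary triple; the two arguments are essentially the same.
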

	\begin{proof}
		Let $G$ be an $(L,4)$-free $3$-graph on $[n]$ with at least one edge. Let $C$ be a maximal clique in $G$ and suppose for contradiction that there is a vertex $v\notin C$. 
		Note that because $G$ contains an edge, $|C|\ge 3$.
		For any $3$ distinct vertices $i, j$ and $k$ in $C$, we have that $\{i, j, k, v\}$ induces a complete $3$-graph. Thus, $C\cup\{v\}$ is a clique as well, which contradicts the maximality of $C$. It follows that
		all the vertices in $[n]$ are in $C$, and hence $G$ is complete.
		We conclude that the only $(L,4)$-free $3$-graphs on $[n]$ are the complete $3$-graph and the $3$-graph with no edges.
	\end{proof}

	\begin{claim}
		\label{L023}
		Let $n\geq 5$ and $L=\{0,2,3\}$. Then, $f(n,L)=n+1$.
	\end{claim}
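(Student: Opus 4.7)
The allowed induced edge counts on each $4$-set are exactly $\{1,4\}$. I would split the analysis according to whether $G$ contains a $K_4^3$ (i.e.\ some $4$-set with $4$ edges) or not.

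In the first case, let $C$ be a maximal clique of $G$, necessarily of size at least $4$. For a vertex $v\notin C$ and any four $a,b,c,d\in C$, each of the four $4$-sets obtained by adjoining $v$ to three of $a,b,c,d$ already contains an edge of $C$, so the three triples through $v$ inside each such $4$-set must be collectively all edges or all non-edges. Consistency on overlapping triples (for instance $abv$) forces the same choice across all four $4$-sets; the ``all edges'' alternative would produce a $K_5^3$ on $\{a,b,c,d,v\}$, and iterating the argument on the remaining vertices of $C$ would extend the clique to $C\cup\{v\}$, contradicting the maximality of $C$. Hence $v$ has no edge with two vertices of $C$. I would then show that at most one vertex lies outside $C$: for two outside vertices $v, w$ and any $x, y\in C$, the $4$-set $\{x,y,v,w\}$ forces exactly one of $vwx, vwy$ to be an edge, so the indicator $x\mapsto \mathbf{1}[vwx\in E(G)]$ would have to take different values on every pair of $C$, which is impossible once $|C|\ge 3$. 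The two remaining configurations are $G=K_n^3$ and $G$ equal to $K_{n-1}^3$ plus one isolated vertex, giving $1+n$ graphs, all easily verified to be $(L,4)$-free.

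In the second case, every $4$-set has exactly one edge. Any pair contained in two edges would place two edges into a common $4$-set, so every pair lies in at most one edge. Double-counting $(4\text{-set}, \text{edge})$ incidences gives $m=\binom{n}{3}/4$, hence $3m = n(n-1)(n-2)/8$ pairs are covered; the inequality $3m\le\binom{n}{2}$ then forces $n\le 6$. For $n=5$ the value of $m$ is non-integral, and for $n=6$ equality would turn $E(G)$ into a Steiner triple system $\mathrm{STS}(6)$, which does not exist. Hence the second case is empty for $n\ge 5$, and the total is $n+1$.

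The main obstacle is the first case, specifically the propagation step showing that every vertex outside $C$ has no edge using two vertices of $C$, together with the parity-type obstruction that excludes having two or more vertices outside $C$.
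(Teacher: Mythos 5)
Your proof is correct. Its core --- fix a maximal clique $C$, show no vertex outside $C$ lies in an edge with two vertices of $C$, then show at most one vertex can lie outside $C$ --- is the same as the paper's, though the individual steps are executed differently: you get the first claim from the all-or-nothing behaviour of the triples through $v$ in each $4$-set together with the $K_5^3$ extension, and the second from the parity obstruction on $x \mapsto \mathbf{1}[vwx \in E(G)]$, whereas the paper reaches both conclusions by directly exhibiting a $4$-set spanning $2$ or $3$ edges. The one genuine structural difference is your preliminary case split on whether $G$ contains a $K_4^3$. The paper avoids it: since $0 \in L$ the $3$-graph is nonempty, so one can take $C$ to be a maximal clique that a priori may have size only $3$, and both steps of the clique argument go through for $|C| \ge 3$; the resulting bound $|C| \ge n-1 \ge 4$ then excludes the ``no $K_4^3$'' situation automatically. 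Your separate treatment of that case --- every $4$-set spanning exactly one edge forces a linear $3$-graph with exactly $\binom{n}{3}/4$ edges, whence $n \le 6$, and then integrality for $n=5$ and the nonexistence of a Steiner triple system on $6$ points for $n=6$ finish it --- is correct and self-contained, but it is an extra argument that the uniform version renders unnecessary.
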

	\begin{proof}
		Let $G$ be an $(L,4)$-free $3$-graph on $[n]$. As $0 \in L$, the $3$-graph $G$ has at least one edge, hence we can fix a maximal clique $C$ in $G$.
		We claim that there is no edge containing exactly $2$ vertices of $C$.
		Suppose for contradiction that there is $v \notin C$ and $\{i,j\}\se C$
		such that $vij \in E(G)$. Then, for every $k \in C \setminus \{i,j\}$ the set $\{v,i,j,k\}$ spans at least $2$ edges.
		As $2, 3 \in L$, we have no other choice but to have a complete $3$-graph on $\{v,i,j,k\}$ for all $k \in C \setminus \{i,j\}$.
		As $vik$ is an edge for all $k \in C \setminus \{i,j\}$, we can 
		repeat the same argument and show that $\{v,i,k,\ell\}$ induces a complete graph for all $k,\ell \in C \setminus \{i\}$.
		We conclude that $vk\ell \in E(G)$ for all $k,\ell \in C$, and hence $C\cup \{v\}$ must be a clique. This contradicts the maximality of $C$.
		
		We now claim that $|V(G)\setminus C|\le 1$.
		Suppose for contradiction that there exists distinct vertices $i,j \notin C$.
		Let $c_1$ and $c_2$ be distinct vertices in $C$.
		As $c_1c_2i$ and $c_1c_2j$ are not edges of $G$ and $0,2 \in L$, we have that either $ijc_1$ or $ijc_2$ is an edge of $G$, otherwise we create a forbidden structure in $\{i,j,c_1,c_2\}$.
		Without loss of generality, suppose that $ijc_1 \in E(G)$.
		We can now take another vertex $c_3 \in C \setminus \{c_1,c_2\}$ and repeat the same argument.
		By analyzing the $3$-graph induced by $G$ on $\{i,j,c_2,c_3\}$, we conclude that either $ijc_2$ or $ijc_3$ is an edge of $G$.
		As we assumed that $ijc_2\notin E(G)$, we have $ijc_3 \in E(G)$ and hence the set $\{i,j,c_1,c_3\}$ induces exactly $2$ edges in $G$. Since $2 \in L$, this is a contradiction.

		If $|C|=n$, then $G$ is the complete $3$-graph. If $|C|=n-1$, then $G$ is the union of a clique of size $n-1$ and an isolated vertex.
		There are $n$ such $3$-graphs depending on which vertex the isolated vertex is. 
		We conclude that the number of $(L,4)$-free $3$-graphs on $[n]$ is $n+1$, for all $n \ge 5$.
	\end{proof}
	
	\section{Concluding Remarks}
	\label{sec:concluding remarkscount}
When analyzing the proof of Theorem~\ref{maincounting} in the case where $L=\{1,4\}$, $k=4$ and $r=3$, it can be observed that it actually gives
	\begin{align*}
		f(n,3,4,\{1,4\})\leq \prod_{m=1}^{n-1} m!=G(n+1)=2^{\frac{n^2}{2}\log n(1+o(1))},
	\end{align*}
	where $G$ is the Barnes $G$ function. On the other hand, 
	we have seen in the proof of Theorem~\ref{maincountingcorl} that
	\begin{align*}
	f(n,3,4,\{1,4\})\geq	|Q(n)| = 2^{\frac{n^2}{27}\log n (1+o(1))}.
	\end{align*}
	Towards solving Conjecture~\ref{conj:14-freecounting}, it would be interesting to first determine the constant in front of the main term of the exponent.
	\bibliographystyle{abbrv}
	\bibliography{bibilo}

%	\section{The lower bound - TO DO}
%	Take the Turan graph. Let us count all subhypergraphs of it without a forbidden structure. We cannot remove a edge with two vertices inside a part, otherwise we create a forbidden structure. The only edges we can remove are those which intersect all three parts. Moreover, we should remove a graph which is linear, otherwise we create a forbidden structure.
%	Let $L$ be the maximum linear hypergraph inside the Turan graph with only edges across. Note that $L$ is a subhypergraph of a steiner triple system $S(2,3,n)$.
%	We can just $3$-colour the vertices (in equitable partitions) of the steiner triple system and take the rainbow edges to form $L$.
%	How many edges will we have? For each edge, the probability of this edge be rainbow is $2/9$.
%	How many edges we have in $S(2,3,n)$? It's roughly $e(S)=\binom{n}{2}/3$.
%	It follows that $L$ has size
%	\begin{align*}
%		n^2/27 \le e(L) \le n^2/6.
%	\end{align*}
%	We can also choose edges greedily. At each step $i$ suppose we have chosen $i$ edges. Then we should avoid at most $3in$ edges so in step $i+1$ we have at least $n^3/27-3in$ choices left. It follows that we can proceed until $i = n^2/3^5$. In each step we have at least $n^3/3^4$ choices so it follows that the number of linear subhypergraphs is at least
%	\begin{align*}
%		\left(\dfrac{n^3}{3^4}\right)^{n^2/3^5} \Bigg / \left(\dfrac{n^2}{3^5}\right)! \sim n^{n^2/3^5}.
%	\end{align*}

\end{document}